\documentclass[12pt,english]{amsart}
\usepackage{amsmath,amsthm,amssymb,amsfonts,amscd, array,latexsym, pb-diagram}
\usepackage[noadjust]{cite}
\usepackage[T2A]{fontenc}
\usepackage[cp1251]{inputenc}
\usepackage[english]{babel}
\usepackage{diagmac2} 
\usepackage{tikz}

\usepackage{color} 

\textwidth = 13 cm %


\newtheorem{theorem}{Theorem}[section]
\newtheorem{corollary}[theorem]{Corollary}
\newtheorem{lemma}[theorem]{Lemma}
\newtheorem{proposition}[theorem]{Proposition}

\theoremstyle{definition}
\newtheorem{definition}[theorem]{Definition}
\newtheorem{example}[theorem]{Example}

\theoremstyle{remark}
\newtheorem{remark}[theorem]{Remark}
\numberwithin{equation}{section}

\sloppy 

\newcommand{\RR}{\mathbb{R}}

\def\<#1>{\langle #1 \rangle}

\newbox\onebox
\newcommand{\coherent}[1]{\mathbin{\setbox\onebox=\hbox{$=$}\lower0.7\ht%
\onebox\hbox{$\stackrel{#1}{=}$}}}

\newcommand{\acr}{\newline\indent}

\begin{document}

\title{Delhomme--Laflamme--Pouzet--Sauer space \\ as groupoid}

\author{Oleksiy Dovgoshey and Alexander Kostikov}

\address{\textbf{Oleksiy Dovgoshey}\acr
Department of Theory of Functions \acr
Institute of Applied Mathematics and Mechanics of NASU \acr
Slovyansk, Ukraine, \acr
Department of Mathematics and Statistics \acr
University of Turku,  Turku, Finland}
\email{oleksiy.dovgoshey@gmail.com, oleksiy.dovgoshey@utu.fi}

\address{\textbf{Alexander Kostikov}\acr
LLC Technical  University
``Metinvest Politechnic'' \acr
Zaporizhia, Ukraine}
\email{alexkst63@gmail.com}

\begin{abstract}
 Let $\mathbb{R}^{+}=[0, \infty)$ and let $d^+$ be the ultrametric on $\RR^+$ such that $d^+ (x,y) = \max\{x,y\}$ for all different $x,y \in \RR^+$.
 It is shown that the monomorphisms of the groupoid $(\RR^+, d^+)$ coincide with the injective  ultrametric-preserving functions and that the auto\-mor\-phisms of $(\RR^+, d^+)$ coincide with the self-homeomorphisms of $\RR^+$.
 The structure of endomorphisms of $(\RR^+, d^+)$ is also described.
\end{abstract}

\keywords{Morphism of groupoid, pseudoultrametric, pseudoultrametric-preserving function,  ultrametric, ultrametric-preserving function}

\subjclass[2020]{Primary 26A30, Secondary 54E35, 20M20}

\maketitle

\section{Introduction. Ultrametrics and pseudoultrametrics}

Let  denote by $\RR^+$ the set of all nonnegative real numbers.

\begin{definition}\label{def1.1}
Let $X$ be a nonempty set. An \emph{ultrametric} on $X$ is a function $d : X \times X \to \RR^{+}$ such that for all $x, y, z \in X$:
\begin{itemize}
\item[(i)] $(d(x, y) = 0) \Longleftrightarrow (x = y)$, the \emph{positivity property};
\item[(ii)] $d(x, y) = d(y, x)$, the \emph{symmetry property};
\item[(iii)] $d(x, y) \leqslant \max\{ d(x, z),  d(z, y)\}$, the \emph{ strong triangle inequality}.
\end{itemize}
\end{definition}

 If $d$ is an ultrametric on $X$, then we will say that $(X,d)$ is an \emph{ultrametric space}.

 The following ultrametric space was introduced by Delhomme, Laflamme, Pouzet and Sauer in \cite{DLPS2008TaiA} and this space is the main focus of our research.

Let us define a mapping $d^+: \RR^{+} \times \RR^+ \to \RR^+$ as
\begin{equation}\label{ex1.2_eq1}
            d^+(p,q) := \left\{
  \begin{array}{ll}
    0,  \quad & \hbox{if} \quad p=q, \\
    \max\{p,q\}, \quad & \hbox{otherwise.}
  \end{array}
\right.
\end{equation}
Then $d^+$ is an ultrametric on $\RR^+$.

Different properties of  ultrametric spaces have been studied in \cite{BS2017, DLPS2008TaiA, DD2010, DM2008, DM2009, DP2013SM, GH1961S, GroPAMS1956, Ibragimov2012, KS2012, Lemin1984FAA, Lemin1984RMS39:5, Lemin1984RMS39:1, Lemin1985SMD32:3, Lemin1988, Lemin2003, PTAbAppAn2014, Qiu2009pNUAA, Qiu2014pNUAA, Vau1999TP, Ves1994UMJ, Ish2021ANUAA, Ish2023TA, Ish2023ANUAA, Ish2024TA, SP2024FPTASE, SKPMS2020, DovBBMSSS2020, DK2023JMS}. The importance of the space $(\RR^+, d^+)$ and its subspaces in the theory of ultrametric spaces was noted by  Yoshito Ishiki in \cite{Ish2023ANUAA}.

The useful generalization of the concept of ultrametric is the concept of pseudoultrametric.

\begin{definition}\label{def2.1}
Let $X$ be a nonempty set and let $d : X \times X \to \RR^{+}$ be a symmetric function such that $d(x, x) = 0$ holds for every $x \in X$. The function $d$ is a \emph{pseudoultrametric} on $X$ if it satisfies the strong triangle inequality.
\end{definition}

If $d$ is a pseudoultrametric on $X$, then $(X, d)$ is called a pseudoultrametric space.

\begin{definition}\label{def2.8}
A function $f : \RR^+ \to \RR^+$ is \emph{ultrametric-preserving (pseudoultrametric-preserving)} if $(X, f \circ d)$ is an ultrametric space (pseudoultrametric space) for every ultrametric (pseudoultrametric) space  $(X, d)$.
\end{definition}

\begin{remark}\label{rem1.2}
Here  we write $f \circ d$ for the mapping
$$
X \times X \overset{d}{\to} \RR^+ \overset{f}{\to} \RR^+ .
$$
\end{remark}

As in \cite{Dov2024arx} we denote  by $\mathbf{P_{U}}$ and the set of all ultrametric-preserving functions and, respectively, by $\mathbf{P_{PU}}$ the set of all pseudoultrametric-preserving ones. We also will use the following designations:

$\mathbf{In} (\mathbf{P_{U}}) $ -- the set  of all injective $f \in \mathbf{P_U}$;

$\mathbf{In} (\mathbf{P_{PU}}) $ -- the set  of all injective $f \in \mathbf{P_{PU}}$;

$\mathbf{Ai} (\mathbf{P_{PU}}) $ -- the set  of all $f \in \mathbf{P_{PU}}$ with injective restriction $f_{|\RR^+ \setminus f^{-1} (0)}$;

$\mathbf{ASI}$ -- the set  of all $f : \RR^+ \to \RR^+$ with $f(0)=0$ and strictly increasing restriction  $f_{|\RR^+ \setminus f^{-1} (0)}$;

$\mathbf{SI} $ -- the set  of all strictly increasing $f \in \mathbf{ASI}$;

$\mathbf{End} (S, \ast ) $  -- the set  of all endomorphisms of a groupoid  $(S, \ast)$;

$\mathbf{Mon} (S, \ast ) $ -- the set  of all monomorphisms of $(S, \ast )$;

$\mathbf{Aut} (S, \ast ) $ -- the set  of all automorphisms of $(S, \ast )$.

The main goal of the present paper is to prove the equalities
\begin{equation}\label{s1_eq3}
\mathbf{ASI} = \mathbf{End} (\RR^+, d^+),
\end{equation}
\begin{equation}\label{s1_eq2}
\mathbf{SI}   = \mathbf{Mon} (\RR^+, d^+).
\end{equation}

The paper is organized as follows. The next section contains some definitions and results related to groupoids, ultrametric-preserving  functions, and pseudoultrametric-preserving ones.

The main results of the paper are given in Section~3. Equalities \eqref{s1_eq3} and \eqref{s1_eq2} are proved in Theorems~\ref{th3.15} and \ref{th3.4} respectively.

Theorem~\ref{th3.9} shows that $\mathbf{Aut} (\RR^+, d^+)$ coincides with the set of all  self-homeomorphisms of $\RR^+$.


 \section{Preliminaries on ultrametric-preserving  functions  and groupoids}


Recall that $f : \RR^{+} \to \RR^+$ is \emph{increasing} iff
$$
(a \geqslant b) \Longrightarrow (f (a) \geqslant f (b))
$$
holds for all $a, b \in \RR^+$. Moreover, $f : \RR^+ \to \RR^+$ is \emph{strictly increasing} iff
$$
(a> b)  \Longrightarrow (f(a) > f(b))
$$
holds for all distinct $a, b \in \RR^+$.

A function $f : \RR^{+} \to \RR^+$ is said to be \emph{amenable} if $f^{-1}(0)=\{0\}$.

P.~Ponsgriiam and I. Termwuttipong found the following simple characterization of $\mathbf{P_U}$ in \cite{PTAbAppAn2014}.

\begin{theorem}\label{th2.9}
A function $f : \RR^+ \to \RR^+$ is ultrametric-preserving if and only if $f$ is amenable and increasing.
\end{theorem}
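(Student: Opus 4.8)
The plan is to prove the two implications separately, noting at the outset that the sufficiency direction is routine while the necessity direction turns on a single well-chosen example. For sufficiency, suppose $f$ is amenable and increasing, fix an arbitrary ultrametric space $(X, d)$, and set $\rho := f \circ d$. Symmetry of $\rho$ is inherited immediately from symmetry of $d$. For positivity, $\rho(x, y) = 0$ means $f(d(x, y)) = 0$, i.e.\ $d(x, y) \in f^{-1}(0) = \{0\}$ by amenability, which is equivalent to $x = y$. The only substantive point is the strong triangle inequality, and the key observation is that an increasing $f$ commutes with the maximum, $f(\max\{s, t\}) = \max\{f(s), f(t)\}$ for all $s, t \in \RR^+$ (if $s \leqslant t$ then both sides equal $f(t)$, and symmetrically). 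Applying $f$ to $d(x, y) \leqslant \max\{d(x, z), d(z, y)\}$ and using monotonicity together with this observation yields $\rho(x, y) \leqslant \max\{\rho(x, z), \rho(z, y)\}$.

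For the converse, suppose $f$ is ultrametric-preserving. To obtain amenability I would feed $f$ the two-point space $\{a, b\}$ with $d(a, b) = t$, which is an ultrametric for every $t > 0$. Since $f \circ d$ must then be an ultrametric, its diagonal forces $f(0) = (f \circ d)(a, a) = 0$, while positivity at the pair $a \neq b$ forces $f(t) = (f \circ d)(a, b) \neq 0$; as $t > 0$ is arbitrary, $f^{-1}(0) = \{0\}$. To prove $f$ is increasing I would argue by contradiction: if $f$ were not increasing there would exist $a > b$ with $f(a) < f(b)$, and amenability (namely $f(0) = 0$ together with $f \geqslant 0$) rules out $b = 0$, so $a > b > 0$. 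The crucial step is to exhibit a three-point ultrametric space on which $f \circ d$ breaks the strong triangle inequality: take $X = \{x, y, z\}$ with $d(x, y) = b$ and $d(x, z) = d(z, y) = a$. One checks that this $d$ is a genuine ultrametric, since in every configuration the two largest of the three distances coincide and the strong triangle inequality survives. But then $(f \circ d)(x, y) = f(b) > f(a) = \max\{(f \circ d)(x, z), (f \circ d)(z, y)\}$, so $f \circ d$ fails the strong triangle inequality and is not an ultrametric, contradicting the hypothesis.

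The main obstacle is precisely the design of this three-point example: one must place the small distance $b$ on the single edge whose $f$-image is large and the large distance $a$ on the two edges whose $f$-images are small, so that the isosceles inequality for $d$ is preserved while the strong triangle inequality for $f \circ d$ is destroyed. The boundary case $b = 0$ must be ruled out in advance, which is exactly why amenability has to be established before monotonicity.
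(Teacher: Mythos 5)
Your proof is correct, and since the paper does not actually prove Theorem~\ref{th2.9} but only quotes it from Pongsriiam and Termwuttipong \cite{PTAbAppAn2014}, there is no in-paper argument for it to diverge from. Your route --- the two-point space to force $f^{-1}(0)=\{0\}$, the isosceles three-point space with $d(x,y)=b$ and $d(x,z)=d(z,y)=a$ to force monotonicity, and the identity $f(\max\{s,t\})=\max\{f(s),f(t)\}$ for increasing $f$ in the sufficiency direction --- is precisely the standard proof given in that cited source, with all cases (including $b=0$) handled correctly.
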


The next extension of Theorem~\ref{th2.9} was obtained in \cite{Dov2020MS}.

\begin{proposition}\label{pr2.4}
The following conditions are equivalent for every function $f : \RR^+ \to \RR^+$.
\begin{itemize}
\item[(i)] $f$ is increasing and $f (0) = 0$ holds.
\item[(ii)] $f$ is pseudoultrametric-preserving.
\end{itemize}
\end{proposition}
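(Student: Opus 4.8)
The plan is to establish the two implications separately. The implication (i) $\Rightarrow$ (ii) is a routine verification, so the real work lies in (ii) $\Rightarrow$ (i), and within it in proving monotonicity.

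For (i) $\Rightarrow$ (ii), I take an arbitrary pseudoultrametric space $(X, d)$ and check that $f \circ d$ meets the three requirements of Definition~\ref{def2.1}. Symmetry of $f \circ d$ follows from that of $d$, and the diagonal condition holds because $(f \circ d)(x,x) = f(d(x,x)) = f(0) = 0$. The one nontrivial point is the strong triangle inequality, and the key observation is that an increasing function $f : \RR^+ \to \RR^+$ satisfies $f(\max\{a,b\}) = \max\{f(a), f(b)\}$ for all $a, b \in \RR^+$. Combining this identity with monotonicity, I apply $f$ to $d(x,y) \leqslant \max\{d(x,z), d(z,y)\}$ to obtain $f(d(x,y)) \leqslant \max\{f(d(x,z)), f(d(z,y))\}$, which is exactly what is needed.

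For (ii) $\Rightarrow$ (i), the equality $f(0) = 0$ is immediate, since for any pseudoultrametric space $(X,d)$ the function $f \circ d$ must vanish on the diagonal, forcing $f(0) = f(d(x,x)) = 0$. The core of the argument, and the step I expect to be the main obstacle, is showing that $f$ is increasing. Given $a \geqslant b$ in $\RR^+$, I would encode this comparison in a three-point pseudoultrametric space: on $X = \{u, v, w\}$ put $d(u,v) = b$, $d(v,w) = d(u,w) = a$, and zero on the diagonal. Checking the three nontrivial triples shows that the strong triangle inequality holds precisely because $b \leqslant a$, so $(X,d)$ is indeed a pseudoultrametric space; note that the degenerate cases $a = b$ and $b = 0$ (with $u \neq v$) cause no trouble, as positivity is not demanded of pseudoultrametrics. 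Since $f$ preserves pseudoultrametrics, $f \circ d$ also satisfies the strong triangle inequality, and applying it to the pair $(u,v)$ through the apex $w$ gives $f(b) = (f\circ d)(u,v) \leqslant \max\{(f\circ d)(u,w), (f\circ d)(w,v)\} = f(a)$. Hence $a \geqslant b$ implies $f(a) \geqslant f(b)$, establishing monotonicity.

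The delicate feature of this construction is the need to make $(X,d)$ a genuine pseudoultrametric for \emph{every} pair $a \geqslant b$ while arranging that the surviving inequality for $f \circ d$ isolates the single comparison $f(b) \leqslant f(a)$. Setting two of the three distances equal to the larger value $a$ achieves both aims at once: it keeps the strong triangle inequality valid for $d$, and it collapses the relevant maximum to $f(a)$, yielding the desired inequality without any spurious constraints.
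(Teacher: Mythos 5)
Your proof is correct. Note, however, that the paper itself contains no proof of Proposition~\ref{pr2.4}: it is imported as a known result from \cite{Dov2020MS}, so there is no in-paper argument to compare yours against; what you have written is a complete, self-contained proof of the cited result. Both halves check out. For (i)~$\Rightarrow$~(ii), the identity $f(\max\{a,b\})=\max\{f(a),f(b)\}$ for increasing $f$ is exactly the right observation, and it yields the strong triangle inequality for $f\circ d$; symmetry and vanishing on the diagonal are immediate. For (ii)~$\Rightarrow$~(i), your three-point space $X=\{u,v,w\}$ with $d(u,v)=b$ and $d(u,w)=d(v,w)=a$ is a legitimate pseudoultrametric precisely when $b\leqslant a$ (the only nontrivial triple inequality is $b\leqslant\max\{a,a\}$), and you correctly note that degenerate values cause no problem since positivity is not required by Definition~\ref{def2.1}; applying the strong triangle inequality of $f\circ d$ at the apex $w$ then isolates $f(b)\leqslant f(a)$. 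One structural remark: the three-point space is genuinely needed here, since on a two-point space every symmetric function vanishing on the diagonal satisfies the strong triangle inequality vacuously, so two-point test spaces would give no information about monotonicity --- your construction is essentially the minimal one, and it is the same kind of construction used in the literature this result comes from.
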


\begin{remark}\label{rem2.3}
The set $\mathbf{P_U}$ of ultrametric-preserving functions was also studied in \cite{Dov24TA, BDa2024, BDS2021, KPS2019IJoMaCS} and \cite{VD2021MS}.
\end{remark}

Proposition~\ref{pr2.4} and the definition of $\mathbf{SI}$ imply the following corollary.

\begin{corollary}\label{cor2.3}
The equality
\begin{equation}\label{cor2.3_eq1}
\mathbf{SI}= \mathbf{In} (\mathbf{P_{PU}})
\end{equation}
holds.
\end{corollary}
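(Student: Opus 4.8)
The plan is to reduce everything to Proposition~\ref{pr2.4} and then to the elementary fact that, for increasing functions, injectivity is the same as strict monotonicity.

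First I would unwind the two sides. By definition, $f \in \mathbf{In}(\mathbf{P_{PU}})$ means that $f$ is injective and pseudoultrametric-preserving; by Proposition~\ref{pr2.4} the latter is equivalent to $f$ being increasing with $f(0)=0$. On the other side, $f \in \mathbf{SI}$ means $f \in \mathbf{ASI}$ and $f$ is strictly increasing. Here I would first observe that a strictly increasing $f$ with $f(0)=0$ is automatically amenable, since $x>0$ forces $f(x)>f(0)=0$; consequently $\RR^+ \setminus f^{-1}(0) = (0,\infty)$ and the restriction $f_{|(0,\infty)}$ is strictly increasing as well. Thus the defining conditions of $\mathbf{ASI}$ are redundant for a strictly increasing map, and $\mathbf{SI}$ is simply the set of strictly increasing $f$ with $f(0)=0$.

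The crux is the equivalence, for a function $f:\RR^+\to\RR^+$, between ``$f$ is increasing and injective'' and ``$f$ is strictly increasing''. One direction is immediate: a strictly increasing function is increasing, and it is injective because $a\neq b$ gives $f(a)\neq f(b)$ after ordering $a,b$. For the converse, given $a>b$, monotonicity yields $f(a)\geqslant f(b)$ while injectivity rules out equality, so $f(a)>f(b)$. Chaining these reductions gives: $f\in\mathbf{In}(\mathbf{P_{PU}})$ iff $f$ is increasing, injective and $f(0)=0$, iff $f$ is strictly increasing and $f(0)=0$, iff $f\in\mathbf{SI}$, which is exactly \eqref{cor2.3_eq1}.

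I expect no serious obstacle here; the only point demanding attention is the bookkeeping around the definitions of $\mathbf{ASI}$ and $\mathbf{SI}$. In particular, one must check that strict monotonicity together with $f(0)=0$ already implies amenability, so that the condition on the restriction $f_{|\RR^+ \setminus f^{-1}(0)}$ requires no separate verification and the two descriptions of $\mathbf{SI}$ coincide.
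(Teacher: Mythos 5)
Your proof is correct and follows essentially the same route as the paper's: reduce $\mathbf{In}(\mathbf{P_{PU}})$ via Proposition~\ref{pr2.4} to increasing functions with $f(0)=0$, and then invoke the elementary fact that an increasing function is strictly increasing if and only if it is injective. The only difference is that you spell out the (correct) observation that the restriction condition in the definition of $\mathbf{ASI}$ is automatic for strictly increasing maps with $f(0)=0$, a bookkeeping step the paper leaves implicit.
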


\begin{proof}
To prove \eqref{cor2.3_eq1} it is sufficient to note that an increasing function $f$ is  strictly increasing if and only if  $f$ is injective.
\end{proof}

\begin{lemma}\label{lem2.3}
Let us consider arbitrary  $f \in \mathbf{In}(\mathbf{P_{PU}})$. Then $f$ is ultrametric-preserving,
\begin{equation}\label{lem2.3_eq1}
    f \in \mathbf{P_U}.
\end{equation}
\end{lemma}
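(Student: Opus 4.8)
The plan is to translate the membership $f\in\mathbf{In}(\mathbf{P_{PU}})$ into the concrete analytic conditions supplied by Proposition~\ref{pr2.4}, and then verify the conditions of Theorem~\ref{th2.9}. Since $f\in\mathbf{P_{PU}}$, Proposition~\ref{pr2.4} tells me at once that $f$ is increasing and that $f(0)=0$. Increasingness is exactly one of the two hypotheses in the characterization of $\mathbf{P_U}$ given by Theorem~\ref{th2.9}, so the only remaining task is to establish the other one, namely that $f$ is amenable, i.e.\ that $f^{-1}(0)=\{0\}$.

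First I would note the inclusion $\{0\}\subseteq f^{-1}(0)$, which is immediate from $f(0)=0$. For the reverse inclusion I would use injectivity: if $f(x)=0$ for some $x\in\RR^+$, then $f(x)=f(0)$, and since $f$ is injective this forces $x=0$. Hence $f^{-1}(0)=\{0\}$, so $f$ is amenable. Having shown that $f$ is both amenable and increasing, I would then invoke Theorem~\ref{th2.9} to conclude $f\in\mathbf{P_U}$, which is precisely \eqref{lem2.3_eq1}.

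Alternatively, the same conclusion follows even more directly from Corollary~\ref{cor2.3}: the equality $\mathbf{SI}=\mathbf{In}(\mathbf{P_{PU}})$ shows that $f$ is strictly increasing with $f(0)=0$, whence $f(x)>f(0)=0$ for every $x>0$, giving amenability at once. There is no genuine obstacle here; the argument is short, and the only substantive point is the passage from injectivity (together with $f(0)=0$) to amenability, which is what distinguishes the ultrametric-preserving condition from the merely pseudoultrametric-preserving one.
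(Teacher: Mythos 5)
Your proof is correct and follows essentially the same route as the paper: extract increasingness and $f(0)=0$ from Proposition~\ref{pr2.4}, use injectivity to deduce $f^{-1}(0)=\{0\}$ (amenability), and conclude via Theorem~\ref{th2.9}. The alternative closing remark via Corollary~\ref{cor2.3} is a harmless variant of the same argument.
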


\begin{proof}
It follows from $f \in \mathbf{In} (\mathbf{P_{PU}})$ that $f \in \mathbf{P_{PU}}$. Since $f$  belongs to $\mathbf{P_{PU}}$, $f$ is increasing and the equality
\begin{equation}\label{lem2.3_preq1}
f(0)=0
\end{equation}
holds by Proposition~\ref{pr2.4}. The injectivity of $f$ and \eqref{lem2.3_preq1} imply
$$
f^{-1} (0) = \{0\}.
$$
Thus $f$ is  amenable and increasing. Hence \eqref{lem2.3_eq1} holds by Theorem~\ref{th2.9}
\end{proof}

Lemma~\ref{lem2.3} gives us the next proposition wich will be used in Section~3 below.

\begin{proposition}\label{pr2.5}
The equality
\begin{equation}\label{pr2.5_eq1}
  \mathbf{In} (\mathbf{P_{PU}}) = \mathbf{In} (\mathbf{P_U})
\end{equation}
holds.
\end{proposition}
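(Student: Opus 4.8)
The plan is to prove the two inclusions separately, each of which reduces to a result already available in the excerpt. For the inclusion $\mathbf{In}(\mathbf{P_{PU}}) \subseteq \mathbf{In}(\mathbf{P_U})$, I would note that this is essentially a restatement of Lemma~\ref{lem2.3}. Indeed, if $f \in \mathbf{In}(\mathbf{P_{PU}})$, then $f$ is injective by the very definition of $\mathbf{In}(\mathbf{P_{PU}})$, and $f \in \mathbf{P_U}$ by \eqref{lem2.3_eq1}. Hence $f$ is an injective ultrametric-preserving function, that is, $f \in \mathbf{In}(\mathbf{P_U})$.

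For the reverse inclusion $\mathbf{In}(\mathbf{P_U}) \subseteq \mathbf{In}(\mathbf{P_{PU}})$, I would take an arbitrary $f \in \mathbf{In}(\mathbf{P_U})$ and show that $f$ is pseudoultrametric-preserving. Since $f \in \mathbf{P_U}$, Theorem~\ref{th2.9} guarantees that $f$ is amenable and increasing; amenability gives $f^{-1}(0) = \{0\}$ and in particular $f(0) = 0$. Thus condition (i) of Proposition~\ref{pr2.4} is satisfied, so the equivalence (i) $\Longleftrightarrow$ (ii) yields $f \in \mathbf{P_{PU}}$. As $f$ is still injective, we conclude $f \in \mathbf{In}(\mathbf{P_{PU}})$.

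Combining the two inclusions gives the desired equality \eqref{pr2.5_eq1}. I do not anticipate any real obstacle, since both directions follow at once from Theorem~\ref{th2.9}, Proposition~\ref{pr2.4}, and Lemma~\ref{lem2.3}. The only point deserving a moment's attention is that injectivity is carried along unchanged in both passages: the underlying function $f$ is literally the same element in each of the two sets, and only the preservation property (pseudoultrametric-preserving versus ultrametric-preserving) is being exchanged, so the injective subsets correspond exactly.
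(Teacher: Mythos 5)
Your proof is correct and follows essentially the same route as the paper: the inclusion $\mathbf{In}(\mathbf{P_{PU}}) \subseteq \mathbf{In}(\mathbf{P_U})$ via Lemma~\ref{lem2.3} plus injectivity, and the reverse inclusion via Theorem~\ref{th2.9} and Proposition~\ref{pr2.4}. The only difference is that you spell out the amenability-gives-$f(0)=0$ step that the paper leaves implicit, which is a welcome clarification rather than a deviation.
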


\begin{proof}

Theorem~\ref{th2.9} and Proposition~\ref{pr2.4} imply the inclusion
\begin{equation}\label{pr2.5_preq2}
\mathbf{In} ( \mathbf{P_{PU}}) \supseteq \mathbf{In} (\mathbf{P_{U}}).
\end{equation}
Suppose now that $f : \RR^+ \to \RR^+$ is an arbitrary element of the set $\mathbf{In} (\mathbf{P_{PU}})$. Then the membership
\begin{equation}\label{pr2.5_preq3}
f \in \mathbf{P_U}
\end{equation}
is valid by Lemma~\ref{lem2.3}. Since $f$ is injective, \eqref{pr2.5_preq3} implies
$$
f \in \mathbf{In} (\mathbf{P_U}).
$$
Consequently the inclusion
$$
\mathbf{In} (\mathbf{P_{PU}}) \subseteq \mathbf{In} (\mathbf{P_{U}})
$$
holds. The last inclusion and \eqref{pr2.5_preq2} give us \eqref{pr2.5_eq1}.
\end{proof}

Let us recall some definitions connected with the concept of groupoid.

\begin{definition}\label{def2.4}
A \emph{groupoid} is a pair $(S, \ast)$ consisting of a set $S$ and a binary operation $\ast : S \times S \to S$ which is called the \emph{composition} on $S$. An element $e \in S$ of a groupoid $(X, \ast)$ is said to be the \emph{identity} of $S$  if the equalities
$$e \ast s = s \ast e = s$$
hold for every $s \in S$.
\end{definition}

\begin{remark}\label{rem2.7}
Nicolas Bourbaki uses the term ``unital magma'' to refer to a groupoid with identity element (see Definition~2 in \cite[p.~12]{Bourbaki1974}).
\end{remark}

It is easy to prove that, for arbitrary groupoid $(S, \ast )$, the identity element is unique if it exists, see, for example \cite{R1968}, p.~111, Proposition~1. In what follows we denote such element as $1_S$.

Let us consider the basic example of a groupoid  for us.

\begin{example}\label{ex2.11}
Let $(\RR^+, d^+)$ be the ultrametric space defined by formula~\eqref{ex1.2_eq1}.   Then $(\RR^+, d^+)$ is a groupoid with the composition $d^+$ and  the identity element
\begin{equation}\label{ex2.3_eq2}
1_{\RR^+}=0.
\end{equation}
To see that \eqref{ex2.3_eq2} holds it suffices to note that the equalities
$$
 d^+ (x,0) = x = d^+ (0,x)
$$
follow from \eqref{ex1.2_eq1} for each $x \in \RR^+$.
\end{example}

\begin{definition}\label{def2.5}
A groupoid $(S, \ast)$ is said to be a \emph{monoid} if $S$ contains an identity element and $\ast$   is associative.
\end{definition}

\begin{example}\label{ex2.6}
Let us define a composition $\vee$ on the set $\RR^+$ as
\begin{equation*}\label{ex2.6_eq1}
x \vee y  = \max \{x,y\}
\end{equation*}
for all $x,y \in \RR^+$. Then $(\RR^+, \vee)$ is a monoid with the identity element
\begin{equation}\label{ex2.6_eq2}
1_{\RR^+}=0.
\end{equation}
To see that $(\RR^+,\vee)$ really is a monoid and \eqref{ex2.6_eq2} holds, it suffices to note that the equalities
$$
\max \{ \{x,y\}, z\} = \max \{x,y,z\} = \max \{x, \{y,z\}\}
$$
and
$$
\max \{x,0\} =  \max \{ 0, x\}= x
$$
are satisfied for all $x,y,z \in \RR^+$.
\end{example}

\begin{proposition}\label{prop2.10}
The groupoid $(\RR^+, d^+)$  is not a monoid.
\end{proposition}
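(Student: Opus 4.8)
The plan is to unwind Definition~\ref{def2.5}: a monoid is a groupoid that simultaneously possesses an identity element and has an associative composition. Example~\ref{ex2.11} already establishes that $(\RR^+, d^+)$ has the identity element $0$, so that requirement is met and cannot be the source of failure. Consequently, to prove that $(\RR^+, d^+)$ is \emph{not} a monoid, it suffices to show that the composition $d^+$ fails to be associative.

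The approach is to produce an explicit counterexample to associativity, and the key observation driving the choice of arguments is that $d^+$ vanishes on the diagonal, i.e. $d^+(a,a)=0$ for every $a$. Thus, by feeding in two equal arguments, one of the two bracketings of a triple collapses to $0$, whereas the other need not, and this mismatch is exactly what breaks associativity. Concretely, I would take the triple $1, 2, 2$ and compare the two bracketings. Using \eqref{ex1.2_eq1}, since $1 \neq 2$ we have $d^+(1,2)=\max\{1,2\}=2$, hence $d^+\bigl(d^+(1,2),2\bigr)=d^+(2,2)=0$. On the other hand, $d^+(2,2)=0$ yields $d^+\bigl(1,d^+(2,2)\bigr)=d^+(1,0)=\max\{1,0\}=1$. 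Since $0 \neq 1$, the composition $d^+$ is not associative, and therefore $(\RR^+, d^+)$ is not a monoid.

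There is no substantive obstacle in this argument; the only genuine step is selecting arguments for which the diagonal collapse of $d^+$ produces distinct values under the two bracketings, and the mild care needed is to keep the outer argument strictly larger so that the collapse actually occurs on the correct side. Indeed, any triple $(b,a,a)$ with $0 < b < a$ works equally well, since then $d^+\bigl(d^+(b,a),a\bigr)=d^+(a,a)=0$ while $d^+\bigl(b,d^+(a,a)\bigr)=d^+(b,0)=b>0$, so the two bracketings differ.
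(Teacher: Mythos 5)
Your proof is correct and takes essentially the same approach as the paper: both produce an explicit counterexample to associativity by repeating an argument so that $d^+(a,a)=0$ collapses one bracketing to $0$ while the other remains positive. The paper uses the triple $(x,x,y)$ with $x>y>0$ and you use $(b,a,a)$ with $0<b<a$, which is just the mirror image of the same idea.
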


\begin{proof}
Indeed, let $x, y \in \RR^+$ and let the double inequality
\begin{equation}\label{prop2.10_eq1}
x>y>0
\end{equation}
hold. Then using \eqref{ex1.2_eq1} and \eqref{prop2.10_eq1} we obtain
$$
d^+ (d^+ (x, x), y) = d^+(0,y) = y \neq  0 = d^+(x,x) = d^+(x, d^+(x,y)).
$$
Thus the composition $d^+$  is not associative.
\end{proof}

\begin{definition}\label{def2.5*}
Let $S=(S, \ast, 1_S)$  be a groupoid. Then a mapping $\Phi: S \to S$ is called an \emph{endomorphism} if, for all $x,y \in S$, we have
$$
\Phi (x \ast y) = \Phi (x) \ast \Phi (y)
$$
and the equality
$$
\Phi (1_S) = 1_S
$$
holds. An injective endomorphism $S \to S$ is called a \emph{monomorphism} of~$S$. The bijective endomorphisms of $S$ are called  the \emph{authomorphisms} of~$S$.
\end{definition}

\begin{lemma}\label{lem3.5}
A function $ f:  \RR^+ \to \RR^+$ is an endomorphism of the groupoid $(\RR^+, d^+)$ if and only if $f(0) = 0 $ and the equality
\begin{equation*}\label{lem3.5_eq1}
 f(d^+ (x,y)) =  d^+ (f(x), f(y))
\end{equation*}
holds for all $x,y \in \mathbb{R}^+$.
\end{lemma}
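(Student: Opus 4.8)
The plan is to observe that this lemma is simply a transcription of the abstract endomorphism definition into the concrete setting of the groupoid $(\RR^+, d^+)$, so the proof amounts to substituting the known data of this specific groupoid into Definition~\ref{def2.5*}. First I would recall that, by Definition~\ref{def2.5*}, a mapping $f : \RR^+ \to \RR^+$ is an endomorphism of $(\RR^+, d^+)$ precisely when two conditions hold simultaneously: the homomorphism property $f(x \ast y) = f(x) \ast f(y)$ for all $x, y$, and the identity-preservation equality $f(1_{\RR^+}) = 1_{\RR^+}$. Here the composition $\ast$ is the ultrametric $d^+$ itself, so the homomorphism property reads
$$
f(d^+(x,y)) = d^+(f(x), f(y))
$$
for all $x, y \in \RR^+$, which is already exactly the displayed equality in the statement.

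The only remaining ingredient is to identify the identity element. I would invoke Example~\ref{ex2.11}, which established that $(\RR^+, d^+)$ is indeed a groupoid with $1_{\RR^+} = 0$. Substituting this value into the identity-preservation condition $f(1_{\RR^+}) = 1_{\RR^+}$ turns it into the equality $f(0) = 0$. Since each of the two conditions in Definition~\ref{def2.5*} corresponds verbatim to one of the two conditions in the lemma after this substitution, both implications of the ``if and only if'' are obtained at once, and no further argument is needed for either direction.

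There is no genuine obstacle here: the content of the lemma is entirely definitional, and the sole nontrivial input is the computation carried out in Example~\ref{ex2.11} showing $1_{\RR^+} = 0$. The value of stating the lemma separately is organizational rather than mathematical, namely to record in a single convenient form the concrete criterion that will be applied repeatedly in Section~3 when comparing $\mathbf{End}(\RR^+, d^+)$ with the classes $\mathbf{ASI}$, $\mathbf{SI}$, and the like. Accordingly, I would keep the proof to a few lines, citing Definition~\ref{def2.5*} for the equivalence and Example~\ref{ex2.11} for the equality $1_{\RR^+} = 0$.
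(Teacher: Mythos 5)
Your proposal is correct and matches the paper's own proof, which is simply the observation that the lemma ``follows from Definitions~\ref{def2.4} and \ref{def2.5*}.'' Your version is in fact slightly more careful, since you explicitly cite Example~\ref{ex2.11} for the fact that $1_{\RR^+}=0$, which the paper leaves implicit.
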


\begin{proof}
It follows from Definitions~\ref{def2.4} and \ref{def2.5*}.
\end{proof}

\section{Main results}

Let us turn now to the set $\mathbf{End} (\RR^+, d^+)$  of endomorphisms of the groupoid $(\RR^+, d^+)$.

\begin{lemma}\label{pr3.10}
The inclusion
\begin{equation}\label{pr3.10_eq1}
    \mathbf{End} (\RR^+, d^+)  \subseteq  \mathbf{P_{PU}}
\end{equation}
holds.
\end{lemma}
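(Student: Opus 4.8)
The plan is to reduce the claim to monotonicity via Proposition~\ref{pr2.4}. If $f \in \mathbf{End}(\RR^+, d^+)$, then Lemma~\ref{lem3.5} supplies both $f(0) = 0$ and the identity $f(d^+(x,y)) = d^+(f(x), f(y))$ valid for all $x, y \in \RR^+$. By Proposition~\ref{pr2.4}, an $f$ with $f(0)=0$ lies in $\mathbf{P_{PU}}$ precisely when it is increasing. Since $f(0)=0$ is already at hand, it suffices to prove that $f$ is increasing.

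To establish monotonicity I would fix arbitrary $a, b \in \RR^+$ with $a > b$ and exploit that $\max\{a,b\} = a$, so that $d^+(a,b) = a$ by \eqref{ex1.2_eq1}. Applying the homomorphism identity to the pair $(a,b)$ then yields
$$
f(a) = f(d^+(a,b)) = d^+(f(a), f(b)).
$$
The task is to read off $f(a) \geq f(b)$ from this single equation. The cleanest route is by contradiction: assuming $f(a) < f(b)$ forces $f(a) \neq f(b)$, whence $d^+(f(a), f(b)) = \max\{f(a), f(b)\} = f(b)$ by \eqref{ex1.2_eq1}; but then $f(a) = d^+(f(a), f(b)) = f(b)$, contradicting $f(a) < f(b)$. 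Hence $f(a) \geq f(b)$ whenever $a > b$, which is exactly the statement that $f$ is increasing.

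Combining $f(0)=0$ with the monotonicity just obtained, Proposition~\ref{pr2.4} gives $f \in \mathbf{P_{PU}}$. As $f \in \mathbf{End}(\RR^+, d^+)$ was arbitrary, this establishes the inclusion \eqref{pr3.10_eq1}.

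I do not expect a genuine obstacle here; the only point requiring care is the degenerate behaviour of $d^+$ when its two arguments coincide. Indeed, $d^+(f(a), f(b))$ collapses to $0$ exactly when $f(a) = f(b)$, so a naive attempt to assert $d^+(f(a),f(b)) \geq f(b)$ unconditionally would fail. Phrasing the argument as a contradiction, or equivalently splitting into the cases $f(a) = f(b)$ (which forces $f(a)=f(b)=0$) and $f(a) \neq f(b)$, sidesteps this subtlety cleanly.
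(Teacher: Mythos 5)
Your proof is correct and takes essentially the same route as the paper: both reduce the claim via Proposition~\ref{pr2.4} to establishing $f(0)=0$ (from the identity-preservation in Definition~\ref{def2.5*}) and monotonicity, and both extract monotonicity from the endomorphism identity $f(d^+(x,y)) = d^+(f(x),f(y))$ applied to an ordered pair. The only difference is cosmetic --- the paper fixes $x \leqslant y$ and splits into the cases $f(y)=0$ and $f(y)\neq 0$, whereas you fix $a>b$ and argue by contradiction; if anything, your use of a strict inequality is slightly tidier, since it makes the evaluation $d^+(a,b)=a$ unconditional, while the paper's claim that $d^+(x,y)=y$ for $x\leqslant y$ silently ignores the (trivial) case $x=y>0$.
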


\begin{proof}
Let us consider an arbitrary endomorphism $f$ of the groupoid $(\RR^+, d^+)$,
\begin{equation}\label{pr3.10_preq1}
    f \in \mathbf{End} (\RR^+, d^+).
\end{equation}
To prove inclusion \eqref{pr3.10_eq1} we must show that
\begin{equation}\label{pr3.10_preq2}
    f \in  \mathbf{P_{PU}}.
\end{equation}
By Proposition~\ref{pr2.4} membership~\eqref{pr3.10_preq2} holds if $f$ is increasing and satisfies the equality
\begin{equation}\label{pr3.10_preq3}
    f(0)=0.
\end{equation}
Equality~\eqref{pr3.10_preq3} follows from Definition~\ref{def2.5*} and membership~\eqref{pr3.10_preq1}. Thus, it is enough to prove that $f$ is an increasing function, i.e.
\begin{equation}\label{pr3.10_preq4}
   f(x) \leqslant f(y)
\end{equation}
holds whenever  $x,y \in \mathbb{R}^+$ and
\begin{equation}\label{pr3.10_preq5}
    x \leqslant y.
\end{equation}
Let us consider arbitrary $x, y \in \RR^+$ satysfying \eqref{pr3.10_preq5}. By Lemma~\ref{lem3.5} membership~\eqref{pr3.10_preq1} implies the equality
\begin{equation}\label{pr3.10_preq6}
     f(d^+(x,y)) = d^+ (f(x), f(y)).
\end{equation}
Using~\eqref{ex1.2_eq1} and \eqref{pr3.10_preq5} we obtain the equality $d^+(x,y) = y$. The last equality and \eqref{pr3.10_preq6} give us
\begin{equation}\label{pr3.10_preq7}
  f(y)= d^+(f(x), f(y)).
\end{equation}
Suppose first that $f(y) =0$. Then we can rewrite \eqref{pr3.10_preq7} us
\begin{equation}\label{pr3.10_preq8}
     d^+(f(x), f(y)) =0.
\end{equation}
Since $d^+$ is an  ultrametric, \eqref{pr3.10_preq8} is valid iff
$$
f(x) = f(y),
$$
that implies \eqref{pr3.10_preq4}.
If $f(y) \neq 0$, then \eqref{pr3.10_preq7} implies the equality
\begin{equation}\label{pr3.10_preq9}
    f(y) = \max \{f(x), f(y)\}.
\end{equation}
Inequality~\eqref{pr3.10_preq4} follows from~\eqref{ex1.2_eq1} and \eqref{pr3.10_preq9}.

The proof is completed.
\end{proof}

The following example shows that  $\mathbf{End}  (\RR^+, d^+)$ is a proper subset of $\mathbf{P_{PU}}$.

\begin{example}\label{ex3.11}
Let $a$ be an arbitrary point of $(0,  \infty)$. Let us define $f: \RR^+ \to \RR^+$ as
$$
f(t) = \left\{
  \begin{array}{ll}
   0, & \quad \hbox{if} \quad t=0, \\
   a, & \quad \hbox{otherwise.}
  \end{array}
\right.
$$
Proposition~\ref{pr2.4} implies $f \in \mathbf{P_{PU}}$. Suppose that $f$ is an endomorphism of $(\RR^+, d^+)$. Then, by Lemma~\ref{lem3.5}, the equality
\begin{equation}\label{ex3.11_eq1}
    f(d^+(x,y)) = d^+ (f(x), f(y))
\end{equation}
holds for all $x,y \in \RR^+$.  In particular, for $x=0$ and $y=1$, using \eqref{ex1.2_eq1} we obtain
\begin{equation}\label{ex3.11_eq2}
   f(a)= f(d^+(0,1)) = d^+ (f(0), f(1))= d^+ (0,a) = a.
\end{equation}
Similarly if $x=1$ and $y=2$ then~\eqref{ex3.11_eq1} implies
\begin{equation}\label{ex3.11_eq3}
   f(a)= f^+(d^+(1,2)) = d^+ (f(1), f(2)) = d^+ (a,a) = 0.
\end{equation}
It follows from~\eqref{ex3.11_eq2} and \eqref{ex3.11_eq3} that $a=0$, contrary to $a>0$. Thus we have
$$
f \in \mathbf{P_{PU}}  \quad \textrm{and} \quad f \notin \mathbf{End} (\RR^+, d^+).
$$
\end{example}

By analysing Example~\ref{ex3.11}, we can prove the following lemma.

\begin{lemma}\label{lem3.12}
    Let $a  \in (0, \infty)$ and let $f: \RR^+ \to \RR^+$ be a function such that $f(0)=0$ and
    $$
      f(x) = f(y) = a
    $$
    for some different $x,y \in (0, \infty)$. Then $f$ is not an endomorphism of the groupoid $(\RR^+,d^+)$,
    $$
       f \notin \mathbf{End} (\RR^+, d^+).
    $$
\end{lemma}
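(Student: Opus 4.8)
The plan is to argue by contradiction, following the pattern of Example~\ref{ex3.11} but isolating the single feature that forces the collapse. Suppose, contrary to the assertion, that $f \in \mathbf{End}(\RR^+, d^+)$. Since $f(0)=0$ by hypothesis, Lemma~\ref{lem3.5} applies and yields
$$
f(d^+(u,v)) = d^+(f(u), f(v))
$$
for all $u, v \in \RR^+$. I would then instantiate this identity at the distinguished pair $u=x$, $v=y$ and extract a contradiction from the value $a$.

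The key observation is that $\max\{x,y\}$ is itself one of the two points $x$, $y$, and on both of them $f$ takes the value $a$. Concretely, since $x$ and $y$ are distinct, \eqref{ex1.2_eq1} gives $d^+(x,y) = \max\{x,y\}$, and because $\max\{x,y\} \in \{x,y\}$ while $f(x)=f(y)=a$, the left-hand side of the endomorphism identity equals
$$
f(d^+(x,y)) = f(\max\{x,y\}) = a.
$$
On the other hand, the right-hand side is $d^+(f(x), f(y)) = d^+(a,a) = 0$, because $f(x)$ and $f(y)$ coincide. Comparing the two expressions forces $a = 0$.

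This contradicts the hypothesis $a \in (0,\infty)$, so $f$ cannot be an endomorphism, i.e.\ $f \notin \mathbf{End}(\RR^+, d^+)$. I expect no genuine obstacle here: the whole argument rests on the elementary remark that the value $f(\max\{x,y\})$ is already pinned down to $a$ by the hypothesis, so that the endomorphism identity collapses at once. The only point deserving a word of care is that one need not treat the cases $x<y$ and $x>y$ separately, since $\max\{x,y\}$ lies in $\{x,y\}$ in either case and $f$ agrees on both of its arguments.
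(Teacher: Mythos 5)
Your proof is correct and is precisely the argument the paper intends: the paper offers no separate proof of this lemma, saying only that it follows ``by analysing Example~\ref{ex3.11}'', and your computation --- instantiating the endomorphism identity at the pair $(x,y)$ to get $a = f(\max\{x,y\}) = f(d^+(x,y)) = d^+(f(x),f(y)) = d^+(a,a) = 0$, contradicting $a>0$ --- is exactly the generalization of the computation at the pair $(1,2)$ in that example. The observation that $\max\{x,y\}\in\{x,y\}$ so the left-hand side is pinned to $a$ is the key point, and you have it.
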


\begin{lemma} \label{lem3.13}
The inclusion
\begin{equation}\label{lem3.13_eq1}
\mathbf{End} (\RR^+, d^+) \subseteq \mathbf{ASI}
\end{equation}
holds.
\end{lemma}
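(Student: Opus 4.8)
The plan is to combine the two preceding lemmas with the elementary fact that an increasing injective function is strictly increasing. Fix an arbitrary endomorphism $f \in \mathbf{End}(\RR^+, d^+)$. By Lemma~\ref{pr3.10} we have $f \in \mathbf{P_{PU}}$, so Proposition~\ref{pr2.4} tells us that $f$ is increasing and $f(0) = 0$. The first half of the defining condition for $\mathbf{ASI}$, namely $f(0)=0$, is thus immediate, and it remains only to show that the restriction $f_{|\RR^+ \setminus f^{-1}(0)}$ is strictly increasing.

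Since $f$ is increasing on all of $\RR^+$, its restriction to any subset is increasing as well. An increasing function on a subset of $\RR^+$ is strictly increasing precisely when it is injective there, so I would reduce the goal to proving that $f_{|\RR^+ \setminus f^{-1}(0)}$ is injective. Suppose toward a contradiction that it fails to be injective. Then there exist distinct $x, y \in \RR^+ \setminus f^{-1}(0)$ with $f(x) = f(y)$. Put $a := f(x) = f(y)$; because $x, y \notin f^{-1}(0)$ we have $a \neq 0$, hence $a \in (0, \infty)$. Moreover $x, y > 0$, since $f(0)=0$ would place $0$ in $f^{-1}(0)$ and thereby exclude it from the domain of the restriction.

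At this point the hypotheses of Lemma~\ref{lem3.12} are exactly met: we have $a \in (0, \infty)$, a function $f$ with $f(0)=0$, and two different points $x, y \in (0, \infty)$ with $f(x)=f(y)=a$. Lemma~\ref{lem3.12} then yields $f \notin \mathbf{End}(\RR^+, d^+)$, contradicting our choice of $f$. Hence $f_{|\RR^+ \setminus f^{-1}(0)}$ is injective, and being increasing it is strictly increasing, so $f \in \mathbf{ASI}$. This establishes inclusion~\eqref{lem3.13_eq1}.

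I do not anticipate a serious obstacle, as the substance of the argument is already carried by Lemma~\ref{lem3.12}; the only point requiring a little care is verifying that the two coinciding points lie in $(0,\infty)$, rather than allowing one of them to equal $0$, so that Lemma~\ref{lem3.12} genuinely applies.
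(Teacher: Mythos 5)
Your proof is correct and follows essentially the same route as the paper's: both deduce from Lemma~\ref{pr3.10} and Proposition~\ref{pr2.4} that $f$ is increasing with $f(0)=0$, reduce strict monotonicity of the restriction to its injectivity, and derive a contradiction from a failure of injectivity via Lemma~\ref{lem3.12}, checking that the offending points lie in $(0,\infty)$. The only cosmetic difference is that the paper frames the entire argument as a proof by contradiction from $f \notin \mathbf{ASI}$, whereas you isolate the contradiction to the injectivity step.
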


\begin{proof}
Let us consider an arbitrary function
\begin{equation}\label{lem3.13_preq1}
f \in \mathbf{End} (\RR^+, d^+).
\end{equation}
We must show that
\begin{equation}\label{lem3.13_preq2}
f \in \mathbf{ASI}.
\end{equation}
Suppose contrary that
\begin{equation}\label{lem3.13_preq3}
f \notin \mathbf{ASI}.
\end{equation}
By Lemma~\ref{pr3.10}, membership~\eqref{lem3.13_preq1} implies
\begin{equation}\label{lem3.13_preq4}
f \in \mathbf{P_{PU}}.
\end{equation}
Now using~\eqref{lem3.13_preq4} and Proposition~\ref{pr2.4} we obtain that $f$  is increasing and satisfies $f(0)=0$. Hence~\eqref{lem3.13_preq3} implies that the restriction of $f$ on the set $\RR^+ \setminus f^{-1} (0)$ is not strictly increasing. An increasing function is strictly increasing iff it is an injective function. Thus $f_{|\RR^+\setminus f^{-1} (0)}$ is not injective. Consequently there are $a \in (0, \infty)$ and $x,y \in \RR^+ \setminus f^{-1} (0)$ such that
\begin{equation}\label{lem3.13_preq5}
f (x) = f(y) = a.
\end{equation}
The equality $f(0)=0$ implies
$$
\RR^+ \setminus f^{-1} (0) \subseteq (0, \infty).
$$
Consequently the points $a, x $ and $y$  belong to $(0, \infty)$. Now applying Lemma~\ref{lem3.12} we obtain
$$
f \notin \mathbf{End} (\RR^+, d^+)
$$
contrary to \eqref{lem3.13_preq1}. Thus \eqref{lem3.13_preq2} is valid, that implies ~\eqref{lem3.13_eq1}. The proof is completed.
\end{proof}

\begin{lemma}\label{lem3.14}
The equality
$$
\mathbf{ASI} = \mathbf{Ai} (\mathbf{P_{PU}})
$$
holds.
\end{lemma}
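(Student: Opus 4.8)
The plan is to establish the two inclusions $\mathbf{Ai}(\mathbf{P_{PU}}) \subseteq \mathbf{ASI}$ and $\mathbf{ASI} \subseteq \mathbf{Ai}(\mathbf{P_{PU}})$ separately, in each case unwinding the two memberships through Proposition~\ref{pr2.4}, which identifies $\mathbf{P_{PU}}$ with the class of increasing functions satisfying $f(0)=0$, and through the elementary equivalence --- already exploited in Corollary~\ref{cor2.3} and in the proof of Lemma~\ref{lem3.13} --- that an increasing function is strictly increasing precisely when it is injective. Since both $\mathbf{ASI}$ and $\mathbf{Ai}(\mathbf{P_{PU}})$ single out a monotonicity property on the same set $\RR^+ \setminus f^{-1}(0)$, the whole statement should come down to transferring between ``injective'' and ``strictly increasing'' on that set, once the ambient function is known to be increasing.

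For the inclusion $\mathbf{Ai}(\mathbf{P_{PU}}) \subseteq \mathbf{ASI}$, I take $f \in \mathbf{Ai}(\mathbf{P_{PU}})$. Then $f \in \mathbf{P_{PU}}$, so by Proposition~\ref{pr2.4} the function $f$ is increasing and $f(0)=0$. The restriction $f_{|\RR^+ \setminus f^{-1}(0)}$ is injective by the definition of $\mathbf{Ai}(\mathbf{P_{PU}})$, and, being a restriction of an increasing function, it is itself increasing; an increasing injective function is strictly increasing. Together with $f(0)=0$ this places $f$ in $\mathbf{ASI}$. This direction is essentially bookkeeping.

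For the reverse inclusion $\mathbf{ASI} \subseteq \mathbf{Ai}(\mathbf{P_{PU}})$, I take $f \in \mathbf{ASI}$, so that $f(0)=0$ and $f_{|\RR^+ \setminus f^{-1}(0)}$ is strictly increasing, hence injective. It therefore suffices to prove $f \in \mathbf{P_{PU}}$, and by Proposition~\ref{pr2.4} this reduces to showing that $f$ is increasing on the whole of $\RR^+$, the equality $f(0)=0$ being given. I expect this global monotonicity to be the only real obstacle: the strict increase hypothesis only controls $f$ on the complement of the zero-set, and one must still control the interaction between $f^{-1}(0)$ and $\RR^+ \setminus f^{-1}(0)$, ruling out that some $z \in f^{-1}(0)$ exceeds a point $w \in \RR^+ \setminus f^{-1}(0)$, for then $f(z)=0 < f(w)$ would destroy monotonicity. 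Concretely, the key step is to argue that $f^{-1}(0)$ is a downward-closed initial segment of $\RR^+$; granting this, global monotonicity follows from a short three-case check (both points in $f^{-1}(0)$; both in the complement, where strict increase applies; or one in each), after which Proposition~\ref{pr2.4} delivers $f \in \mathbf{P_{PU}}$ and hence $f \in \mathbf{Ai}(\mathbf{P_{PU}})$. Combining the two inclusions yields the asserted equality.
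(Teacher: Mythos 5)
Your first inclusion, $\mathbf{Ai}(\mathbf{P_{PU}}) \subseteq \mathbf{ASI}$, is correct: Proposition~\ref{pr2.4} gives that $f$ is increasing with $f(0)=0$, and an increasing injective restriction is strictly increasing. The gap is in the second inclusion, at precisely the step you yourself flag as the key one. You say one must ``argue that $f^{-1}(0)$ is a downward-closed initial segment of $\RR^+$'', but you give no argument for this, and none can exist under the literal definition of $\mathbf{ASI}$ on which your reduction is based: the function
$$
g(t)=\left\{
\begin{array}{ll}
0, & \quad t=2,\\
t, & \quad t\neq 2,
\end{array}
\right.
$$
satisfies $g(0)=0$ and has strictly increasing restriction to $\RR^+\setminus g^{-1}(0)=(0,\infty)\setminus\{2\}$, so it meets the stated definition of $\mathbf{ASI}$; yet $g^{-1}(0)=\{0,2\}$ is not downward closed and $g$ is not increasing (since $g(1)=1>0=g(2)$), so $g\notin\mathbf{P_{PU}}$ by Proposition~\ref{pr2.4}, hence $g\notin\mathbf{Ai}(\mathbf{P_{PU}})$. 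In other words, with the definitions read literally the lemma itself is false, and your proof stalls exactly where it has to.

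The resolution is that the paper silently reads global monotonicity into $\mathbf{ASI}$: its entire proof is the sentence ``Let $f:\RR^+\to\RR^+$ be an increasing function such that $f(0)=0$; then the restriction $f_{|\RR^+\setminus f^{-1}(0)}$ is strictly increasing iff this restriction is injective.'' Under that intended reading (members of $\mathbf{ASI}$ are increasing, vanish at $0$, and are strictly increasing off their zero set), your problematic step disappears --- monotonicity is a hypothesis, not a conclusion --- and both inclusions collapse to the injective/strictly-increasing equivalence you already invoke, so your proof closes immediately. Note that the same reading is forced on Theorem~\ref{th3.15}: the function $g$ above is not an endomorphism, since $g(d^+(1,2))=g(2)=0$ while $d^+(g(1),g(2))=d^+(1,0)=1$, so the inclusion $\mathbf{ASI}\subseteq\mathbf{End}(\RR^+,d^+)$ also fails under the literal definition. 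So your gap is genuine as a matter of logic, but what it exposes is an imprecision in the paper's definition of $\mathbf{ASI}$ rather than an idea the paper supplies and you lack.
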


\begin{proof}
Let $f: \RR^+ \to \RR^+$  be an increasing function such that $f(0)=0$. Then the restriction $f_{|\RR^+ \setminus f^{-1} (0)}$ is strictly increasing iff this restriction is injective.
\end{proof}

The following theorem is the first main result of the paper.

\begin{theorem}\label{th3.15}
The equalities
\begin{equation}\label{th3.15_eq1}
\mathbf{End} (\RR^+, d^+) = \mathbf{ASI} = \mathbf{Ai} (\mathbf{P_{PU}})
\end{equation}
holds.
\end{theorem}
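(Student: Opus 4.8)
The plan is to read off most of the statement from the two preceding lemmas and to reduce the theorem to a single missing inclusion. Indeed, Lemma~\ref{lem3.14} already furnishes the right-hand equality $\mathbf{ASI} = \mathbf{Ai}(\mathbf{P_{PU}})$, and Lemma~\ref{lem3.13} gives $\mathbf{End}(\RR^+, d^+) \subseteq \mathbf{ASI}$. Thus the whole chain \eqref{th3.15_eq1} will follow as soon as I establish the reverse inclusion
$$
\mathbf{ASI} \subseteq \mathbf{End}(\RR^+, d^+).
$$

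To prove this inclusion I would fix an arbitrary $f \in \mathbf{ASI}$ and check the endomorphism criterion of Lemma~\ref{lem3.5}: that $f(0) = 0$ and that $f(d^+(x,y)) = d^+(f(x), f(y))$ for all $x, y \in \RR^+$. The first condition is built into the definition of $\mathbf{ASI}$. For the second, I would first invoke Lemma~\ref{lem3.14} together with Proposition~\ref{pr2.4} to record that $f$, being an element of $\mathbf{Ai}(\mathbf{P_{PU}}) \subseteq \mathbf{P_{PU}}$, is increasing on all of $\RR^+$; this global monotonicity (not merely strict monotonicity off the zero-set) is exactly what will be needed. If $x = y$ both sides of the desired identity are $0$, so by the symmetry of $d^+$ I may assume $x < y$, in which case $d^+(x,y) = y$ and the left-hand side equals $f(y)$; it then remains to show $d^+(f(x), f(y)) = f(y)$.

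The decisive case split is on whether $f(x)$ and $f(y)$ coincide, and this is where the defining property of $\mathbf{ASI}$ is consumed. If $f(x) \ne f(y)$, then monotonicity forces $f(x) < f(y)$, so $d^+(f(x), f(y)) = \max\{f(x), f(y)\} = f(y)$, as wanted. If $f(x) = f(y)$, I would argue that their common value must be $0$: a positive value would place both $x$ and $y$ in $\RR^+ \setminus f^{-1}(0)$, where $f$ is strictly increasing, and $x < y$ would then give $f(x) < f(y)$, a contradiction; hence $f(x) = f(y) = 0$ and $d^+(f(x), f(y)) = 0 = f(y)$. In both cases the homomorphism identity holds, so $f \in \mathbf{End}(\RR^+, d^+)$ by Lemma~\ref{lem3.5}, completing the inclusion and with it the theorem. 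The step I expect to be the crux is ruling out a positive common value of $f$ at two distinct points: this is precisely the obstruction to being an endomorphism exhibited in Example~\ref{ex3.11} and Lemma~\ref{lem3.12}, and the strict monotonicity of $f$ off its zero-set is exactly the hypothesis that excludes it.
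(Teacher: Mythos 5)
Your proposal is correct and follows the same skeleton as the paper's own proof: both use Lemma~\ref{lem3.14} for the right-hand equality and Lemma~\ref{lem3.13} for $\mathbf{End}(\RR^+,d^+) \subseteq \mathbf{ASI}$, reduce everything to the inclusion $\mathbf{ASI} \subseteq \mathbf{End}(\RR^+,d^+)$, and verify the criterion of Lemma~\ref{lem3.5} by a case analysis. Two differences are worth recording. First, the case organization: the paper splits into three cases according to whether both, exactly one, or neither of $x,y$ lies in $f^{-1}(0)$, whereas you assume $x<y$ by symmetry and split on whether $f(x)=f(y)$; the computations are the same and your version is a little shorter. Second, and more substantively, where the paper simply asserts that $f \in \mathbf{ASI}$ implies $f$ is increasing on all of $\RR^+$, you derive this from Lemma~\ref{lem3.14} together with Proposition~\ref{pr2.4}, via $\mathbf{ASI} = \mathbf{Ai}(\mathbf{P_{PU}}) \subseteq \mathbf{P_{PU}}$. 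This detour is a genuine gain in rigor: the literal definition of $\mathbf{ASI}$ (only that $f(0)=0$ and $f_{|\RR^+\setminus f^{-1}(0)}$ is strictly increasing) does not by itself force global monotonicity --- a function vanishing on $\{0\}\cup[2,3]$ and equal to the identity elsewhere has a strictly increasing restriction off its zero-set yet is not increasing, and is not an endomorphism --- so global monotonicity must enter either through an implicit reading of the definition of $\mathbf{ASI}$ (as the paper does) or through the equality of Lemma~\ref{lem3.14} (as you do); your route makes explicit exactly the hypothesis the verification consumes.
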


\begin{proof}
By Lemma~\ref{lem3.14} we have the equality
$$
\mathbf{ASI} = \mathbf{Ai} (\mathbf{P_{PU}}).
$$
Moreover, Lemma~\ref{lem3.13} gives us the inclusion
$$
\mathbf{End} (\RR^+, d^+) \subseteq \mathbf{ASI}.
$$
Hence \eqref{th3.15_eq1} holds iff
\begin{equation}\label{th3.15_preq1}
\mathbf{ASI} \subseteq \mathbf{End} (\RR^+, d^+).
\end{equation}
To prove \eqref{th3.15_preq1} let us consider an arbitrary function $f \in \mathbf{ASI}$. It is sufficient to show that
\begin{equation}\label{th3.15_preq2}
f \in \mathbf{End} (\RR^+, d^+).
\end{equation}
By Lemma~\ref{lem3.5} membership~\eqref{th3.15_preq2} is valid if
\begin{equation}\label{th3.15_preq3}
  f(d^+(x,y))  = d^+ (f(x), f(y))
\end{equation}
holds  for all $x,y  \in \RR^+$.

Let us consider arbitrary $x,y \in\RR^+$. Suppose first that
\begin{equation}\label{th3.15_preq4}
x,y \in f^{-1} (0).
\end{equation}
Assume, without loss of generality, that
$$
x \geqslant y.
$$
Then $d^+ (x,y) = x$ holds by \eqref{ex1.2_eq1} and we have
\begin{equation}\label{th3.15_preq5}
 f(x) = f(y) = 0
\end{equation}
by \eqref{th3.15_preq4}.

Now using \eqref{th3.15_preq4} and \eqref{th3.15_preq5} we obtain \eqref{th3.15_preq2},
$$
f(d^+ (x,y))  = f(x) = 0 = d^+(0,0) = d^+(f(x), f(y)).
$$
Suppose now that exactly one from the points $x,y$ belongs to $f^{-1} (0)$. WLOG , let $x \in \RR^+ \setminus f^{-1} (0)$ and $y \in f^{-1} (0)$. Then we have
\begin{equation}\label{th3.15_preq6}
 f(x) > 0  \quad \textrm{and} \quad f^{-1}(y) =0.
\end{equation}
The membership $f \in \mathbf{ASI}$ implies that $f$ is increasing. Consequently~\eqref{th3.15_preq6} implies the inequality
\begin{equation}\label{th3.15_preq7}
d^+ (f(x),f(y))  = f(x).
\end{equation}
Now using \eqref{th3.15_preq6}, \eqref{th3.15_preq7} and \eqref{ex1.2_eq1} we obtain
\begin{equation}\label{th3.15_preq8}
f(d^+(x,y)) = f(x)
\end{equation}
and
\begin{equation}\label{th3.15_preq9}
d^+(f^+(x), f^+(y)) = f(y).
\end{equation}
Thus \eqref{th3.15_preq3} holds. To complete the proof of \eqref{th3.15_preq2}, it remains to consider the case when
\begin{equation}\label{th3.15_preq10}
x,y \in\RR^+ \setminus f^{-1} (0).
\end{equation}
If $x=y$, then $f(x) = f(y)$ and consequently  we have
$$
f(d^+(x,y)) = f(0) =0 = d^+ (f(x), f(y)),
$$
that implies  \eqref{th3.15_preq2}. Suppose that $x \neq y$. WLOG let $x>0$.

Then \eqref{th3.15_preq10} implies
$$
f(x) > f(y).
$$
Therefore \eqref{th3.15_preq8} and \eqref{th3.15_preq9} follows as above.

Thus equality~\eqref{th3.15_preq3} is valid for all $x,y \in \RR^+$, that implies the validity of \eqref{th3.15_preq2}.

The proof is completed.
\end{proof}

The next theorem  was proved in \cite{Dov2024arx}.

\begin{theorem}\label{mainth}
The sets $\mathbf{End} (\RR^+, \vee)$ and $\mathbf{P}_{\mathbf{PU}}$ are the same,
\begin{equation}\label{mth_eq1}
    \mathbf{End} ({\RR^+}, \vee)= \mathbf{P}_{\mathbf{PU}}.
\end{equation}
\end{theorem}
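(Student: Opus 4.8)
The plan is to reduce both sides of \eqref{mth_eq1} to the same elementary description and then verify the two inclusions directly. Since $0$ is the identity of the monoid $(\RR^+, \vee)$ by Example~\ref{ex2.6}, Definition~\ref{def2.5*} tells us that a function $f : \RR^+ \to \RR^+$ belongs to $\mathbf{End}(\RR^+, \vee)$ precisely when $f(0) = 0$ and
$$
f(x \vee y) = f(x) \vee f(y)
$$
holds for all $x, y \in \RR^+$; this is the exact analogue of Lemma~\ref{lem3.5} for the composition $\vee$, and it follows in the same way from Definitions~\ref{def2.4} and \ref{def2.5*}. On the other hand, by Proposition~\ref{pr2.4}, membership in $\mathbf{P_{PU}}$ is equivalent to $f$ being increasing together with $f(0) = 0$. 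Thus the whole theorem amounts to showing that, for functions satisfying $f(0) = 0$, the condition that $f$ preserves $\vee$ is equivalent to $f$ being increasing.

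For the inclusion $\mathbf{End}(\RR^+, \vee) \subseteq \mathbf{P_{PU}}$, I would take $f \in \mathbf{End}(\RR^+, \vee)$ and arbitrary $a \geqslant b$ in $\RR^+$. Then $a \vee b = a$, so the endomorphism property gives $f(a) = f(a \vee b) = f(a) \vee f(b) = \max\{f(a), f(b)\} \geqslant f(b)$, which shows that $f$ is increasing. Together with $f(0) = 0$ this yields $f \in \mathbf{P_{PU}}$ via Proposition~\ref{pr2.4}.

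For the reverse inclusion $\mathbf{P_{PU}} \subseteq \mathbf{End}(\RR^+, \vee)$, I would take $f \in \mathbf{P_{PU}}$, so that $f$ is increasing and $f(0) = 0$ by Proposition~\ref{pr2.4}. Fixing $x, y \in \RR^+$ and assuming without loss of generality that $x \geqslant y$, we have $x \vee y = x$ and, by monotonicity, $f(x) \geqslant f(y)$, hence $f(x) \vee f(y) = f(x)$. Therefore $f(x \vee y) = f(x) = f(x) \vee f(y)$, which together with $f(0) = 0$ establishes $f \in \mathbf{End}(\RR^+, \vee)$.

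The argument is entirely elementary, and there is no genuine obstacle beyond the bookkeeping; the only point deserving care is the symmetric observation at its heart, namely that $\max$ commutes with a map exactly when the map is monotone, which is what makes the two descriptions coincide.
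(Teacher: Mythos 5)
Your proof is correct. There is, however, nothing in the paper to compare it against: the paper does not prove Theorem~\ref{mainth} at all, but simply quotes it from \cite{Dov2024arx}. What you have supplied is a short, self-contained argument for a result the paper delegates to an external reference. Your reduction is the natural one: by Example~\ref{ex2.6} and Definition~\ref{def2.5*}, membership in $\mathbf{End}(\RR^+,\vee)$ means $f(0)=0$ together with preservation of $\vee$ (and you are right that the identity clause is a genuine extra hypothesis --- a nonzero constant function preserves $\vee$ but is not an endomorphism in the sense of Definition~\ref{def2.5*}); by Proposition~\ref{pr2.4}, membership in $\mathbf{P_{PU}}$ means $f(0)=0$ together with monotonicity. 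Your forward inclusion correctly extracts monotonicity from $f(a)=f(a\vee b)=f(a)\vee f(b)$ when $a\geqslant b$, and in the reverse inclusion the without-loss-of-generality assumption $x\geqslant y$ is legitimate because both sides of the desired identity are symmetric in $x$ and $y$. The net effect of your version is to make the paper self-contained on this point, at the cost of a paragraph of elementary bookkeeping; it is presumably close in spirit to the proof in \cite{Dov2024arx}, where the statement is established in the more general setting of pseudoultrametric-preserving functions as monoid endomorphisms.
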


This theorem implies the following

\begin{lemma}\label{lem3.2}
The equality
\begin{equation}\label{lem3.2_eq1}
   \mathbf{Mon} ({\RR^+}, \vee) = \mathbf{In} (\mathbf{P}_{\mathbf{PU}})
\end{equation}
holds.
\end{lemma}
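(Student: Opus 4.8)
The plan is to derive this purely from Theorem~\ref{mainth} by unwinding the definition of a monomorphism. Recall from Definition~\ref{def2.5*} that a monomorphism of a groupoid is exactly an injective endomorphism. Consequently, $\mathbf{Mon}(\RR^+, \vee)$ is, by its very definition, the collection of all injective members of $\mathbf{End}(\RR^+, \vee)$. Thus the left-hand side of \eqref{lem3.2_eq1} can be written as the set of injective elements of $\mathbf{End}(\RR^+, \vee)$, and the whole task reduces to identifying that set.

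First I would invoke Theorem~\ref{mainth}, which asserts the set-equality $\mathbf{End}(\RR^+, \vee) = \mathbf{P}_{\mathbf{PU}}$. Since two sets are equal precisely when they have the same elements, they share the same injective elements as well. Hence the injective elements of $\mathbf{End}(\RR^+, \vee)$ coincide with the injective elements of $\mathbf{P}_{\mathbf{PU}}$. By the designation fixed in Section~1, the set of all injective $f \in \mathbf{P}_{\mathbf{PU}}$ is exactly $\mathbf{In}(\mathbf{P}_{\mathbf{PU}})$, which is the right-hand side of \eqref{lem3.2_eq1}. Chaining these identifications yields
$$
\mathbf{Mon}(\RR^+, \vee) = \{\, f \in \mathbf{End}(\RR^+, \vee) : f \text{ is injective} \,\} = \{\, f \in \mathbf{P}_{\mathbf{PU}} : f \text{ is injective} \,\} = \mathbf{In}(\mathbf{P}_{\mathbf{PU}}),
$$
which is the desired equality.

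There is essentially no hard step here: the argument is a definitional unwinding combined with an already established set-equality, and the only thing to be careful about is the harmless but genuine observation that passing to injective elements is compatible with the equality of Theorem~\ref{mainth}, i.e.\ that if two sets of functions are equal then their injective parts are equal. The one minor point worth stating explicitly is that ``injective'' is a property of a function alone and does not depend on which of the two descriptions (as an endomorphism of $(\RR^+, \vee)$ or as a pseudoultrametric-preserving function) we use to name it; this is what legitimizes intersecting both sides of $\mathbf{End}(\RR^+, \vee) = \mathbf{P}_{\mathbf{PU}}$ with the class of injective maps. With that remark in place the proof is complete in one line.
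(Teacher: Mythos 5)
Your proof is correct and follows exactly the route the paper takes: its proof of Lemma~\ref{lem3.2} is likewise a one-line deduction from equality~\eqref{mth_eq1} of Theorem~\ref{mainth} together with the definitions of $\mathbf{Mon}(\RR^+, \vee)$ and $\mathbf{In}(\mathbf{P_{PU}})$. You have merely spelled out the definitional unwinding that the paper leaves implicit, so there is nothing to add or correct.
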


\begin{proof}
Equality \eqref{lem3.2_eq1} follows from \eqref{mth_eq1} and the definitions of $\mathbf{Mon} (\RR^+, \vee)$ and $\mathbf{In} (\mathbf{P_{PU}})$.
\end{proof}

The next theorem is the second main result of the paper.

\begin{theorem}\label{th3.4}
    The equalities
    \begin{equation}\label{th3.4_eq1}
        \mathbf{Mon} (\RR^+, d^+)  = \mathbf{SI} = \mathbf{In} (\mathbf{P_U}) = \mathbf{In}  (\mathbf{P_{PU}}) = \mathbf{Mon} (\RR^+, \vee)
    \end{equation}
    hold.
\end{theorem}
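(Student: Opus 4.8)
The plan is to notice that four of the five sets appearing in \eqref{th3.4_eq1} have already been identified with one another in the preliminary material, so that the only genuinely new content is the equality $\mathbf{Mon}(\RR^+, d^+) = \mathbf{SI}$. First I would assemble the chain
$$
\mathbf{SI} = \mathbf{In}(\mathbf{P_{PU}}) = \mathbf{In}(\mathbf{P_U}) = \mathbf{Mon}(\RR^+, \vee)
$$
directly from Corollary~\ref{cor2.3} (which gives $\mathbf{SI} = \mathbf{In}(\mathbf{P_{PU}})$), from Proposition~\ref{pr2.5} (which gives $\mathbf{In}(\mathbf{P_{PU}}) = \mathbf{In}(\mathbf{P_U})$), and from Lemma~\ref{lem3.2} (which gives $\mathbf{Mon}(\RR^+, \vee) = \mathbf{In}(\mathbf{P_{PU}})$). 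This reduces the theorem to showing that $\mathbf{Mon}(\RR^+, d^+)$ coincides with $\mathbf{SI}$.

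For this reduction I would invoke Theorem~\ref{th3.15}, which identifies $\mathbf{End}(\RR^+, d^+)$ with $\mathbf{ASI}$. Since a monomorphism is by Definition~\ref{def2.5*} exactly an injective endomorphism, it follows that $\mathbf{Mon}(\RR^+, d^+)$ is precisely the set of injective members of $\mathbf{ASI}$. Thus the whole theorem comes down to the set-theoretic identity that an element of $\mathbf{ASI}$ is injective on all of $\RR^+$ if and only if it is strictly increasing on all of $\RR^+$, that is, belongs to $\mathbf{SI}$.

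To establish this identity I would argue in both directions. The inclusion $\mathbf{SI} \subseteq \mathbf{Mon}(\RR^+, d^+)$ is immediate, since every strictly increasing function is injective and $\mathbf{SI} \subseteq \mathbf{ASI} = \mathbf{End}(\RR^+, d^+)$ by definition and Theorem~\ref{th3.15}. For the reverse inclusion, let $f \in \mathbf{ASI}$ be injective. By definition of $\mathbf{ASI}$ we have $f(0) = 0$, so injectivity forces $f^{-1}(0) = \{0\}$; hence $\RR^+ \setminus f^{-1}(0) = (0, \infty)$, on which $f$ is strictly increasing, and moreover $f(t) > 0 = f(0)$ for every $t > 0$. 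Combining these facts yields $f(s) < f(t)$ whenever $0 \leqslant s < t$, so $f$ is strictly increasing on all of $\RR^+$ and therefore $f \in \mathbf{SI}$.

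There is no serious obstacle here, as the bulk of the work has been front-loaded into Theorem~\ref{th3.15} and into the preliminary results relating $\mathbf{P_U}$ and $\mathbf{P_{PU}}$. The only point requiring care is the last step, where one must notice that the definition of $\mathbf{ASI}$ permits a nontrivial preimage $f^{-1}(0)$ and demands strict monotonicity only off that preimage; consequently, passing from injectivity to global strict monotonicity genuinely requires using injectivity to rule out the case $f^{-1}(0) \neq \{0\}$.
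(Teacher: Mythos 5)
Your proposal is correct and follows essentially the same route as the paper: the preliminary results (Corollary~\ref{cor2.3}, Proposition~\ref{pr2.5}, Lemma~\ref{lem3.2}) reduce everything to $\mathbf{Mon}(\RR^+, d^+) = \mathbf{SI}$, which is then obtained from Theorem~\ref{th3.15} together with the identification of monomorphisms as injective endomorphisms and the identity $\mathbf{SI} = \{ f \in \mathbf{ASI} : f \textrm{ is injective}\}$. The only difference is that you prove this last identity in detail (using injectivity to force $f^{-1}(0)=\{0\}$ and then upgrading to global strict monotonicity), whereas the paper simply asserts it from the definitions of $\mathbf{ASI}$ and $\mathbf{SI}$; your version is a welcome sharpening, not a different argument.
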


\begin{proof}
    The equality
    $$
\mathbf{Mon} (\RR^+, \vee) = \mathbf{In} (\mathbf{P_{PU}})
    $$
    was proved in Lemma~\ref{lem3.2}. Lemma~\ref{lem2.3} and Proposition~\ref{pr2.5} give us the equalities
    $$
\mathbf{SI} = \mathbf{In} (\mathbf{P_U}) = \mathbf{In} (\mathbf{P_{PU}}).
    $$
    Consequently \eqref{th3.4_eq1} holds iff
    \begin{equation}\label{th3.4_preq1}
        \mathbf{Mon} (\RR^+, d^+) = \mathbf{SI}.
    \end{equation}

    Theorem~\ref{th3.15} implies the equality
    $$
\mathbf{End} (\RR^+, d^+) = \mathbf{ASI}.
    $$
    The last equality and Definition~\ref{def2.5*} give us
    \begin{equation}\label{th3.4_preq2}
      \mathbf{Mon} (\RR^+, d^+)= \{ f \in \mathbf{ASI} : f \textrm{ is injective}\}.
    \end{equation}
    Now using the definition of the sets $\mathbf{ASI}$ and $\mathbf{SI}$  we see that
    \begin{equation}\label{th3.4_preq3}
      \mathbf{SI} = \{ f \in \mathbf{ASI} : f \textrm{ is injective}\}.
    \end{equation}
    Equality~\eqref{th3.4_preq1} follows from equalities \eqref{th3.4_preq2}--\eqref{th3.4_preq3}.

    The proof is completed.
\end{proof}

\begin{corollary}\label{cor3.5}
A function $f: \RR^+ \to \RR^+$ is a monomorphism of the groupoid $(\RR^+, d^+)$ if and only if $f$ is amenable and strictly increasing.
\end{corollary}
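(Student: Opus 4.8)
The plan is to read the statement off Theorem~\ref{th3.4}, which already performs all the substantive work by establishing the chain of equalities, and in particular $\mathbf{Mon}(\RR^+, d^+) = \mathbf{SI}$. What remains is purely a matter of unwinding the definition of $\mathbf{SI}$ into the two conditions ``amenable'' and ``strictly increasing,'' so the proof should be a short deduction rather than a fresh argument.

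First I would recall that $\mathbf{SI}$ consists of the strictly increasing members of $\mathbf{ASI}$, and that $f \in \mathbf{ASI}$ means $f(0) = 0$ together with strict monotonicity of the restriction $f_{|\RR^+ \setminus f^{-1}(0)}$. Since a function that is strictly increasing on all of $\RR^+$ is automatically strictly increasing on every subset of its domain, the clause concerning the restriction is redundant for such functions. Hence $f \in \mathbf{SI}$ is equivalent to the conjunction of the two conditions $f(0) = 0$ and ``$f$ is strictly increasing.'' Next I would verify that this conjunction coincides with ``$f$ is amenable and strictly increasing.'' If $f$ is amenable, then $f^{-1}(0) = \{0\}$, so in particular $f(0) = 0$; this gives one implication. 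Conversely, if $f$ is strictly increasing with $f(0) = 0$, then $f(x) > f(0) = 0$ for every $x > 0$, whence $f^{-1}(0) = \{0\}$ and $f$ is amenable; this gives the reverse implication. Combining these equivalences with the equality $\mathbf{Mon}(\RR^+, d^+) = \mathbf{SI}$ from Theorem~\ref{th3.4} yields the corollary.

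I do not anticipate any genuine obstacle, since all the essential content---that the monomorphisms of $(\RR^+, d^+)$ are exactly the strictly increasing functions fixing $0$---is already contained in Theorem~\ref{th3.4}, and the present statement is merely its restatement in the language of amenability. The only point requiring a moment's care is the direction showing that strict monotonicity together with $f(0) = 0$ forces amenability, which relies on evaluating $f$ at $0$ against an arbitrary $x > 0$ to conclude that $0$ is the sole preimage of $0$.
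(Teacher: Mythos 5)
Your proof is correct, and it takes a slightly different route than the paper's. The paper's own proof is a one-liner that reads off a different link of the chain in Theorem~\ref{th3.4}: it uses $\mathbf{Mon}(\RR^+, d^+) = \mathbf{In}(\mathbf{P_U})$ and then invokes Theorem~\ref{th2.9} (the Pongsriiam--Termwuttipong characterization: ultrametric-preserving $=$ amenable $+$ increasing), so a monomorphism is an injective, amenable, increasing function, and injectivity upgrades ``increasing'' to ``strictly increasing.'' You instead use the link $\mathbf{Mon}(\RR^+, d^+) = \mathbf{SI}$ and unwind the definition of $\mathbf{SI}$ by hand, verifying that ``strictly increasing with $f(0)=0$'' is equivalent to ``amenable and strictly increasing.'' Both are one-step deductions from Theorem~\ref{th3.4}; yours is self-contained modulo that theorem and avoids citing Theorem~\ref{th2.9}, at the cost of a short elementary verification, while the paper's is terser on the page but leans on the external characterization. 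The only point of substance in your argument---that strict monotonicity together with $f(0)=0$ forces $f^{-1}(0)=\{0\}$, and conversely that amenability gives $f(0)=0$---is exactly what makes the two formulations agree, and you handle it correctly.
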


\begin{proof}
It follows from \eqref{th3.4_eq1} and Theorem~\ref{th2.9}.
\end{proof}

Our next goal is to characterize the set $\mathbf{Aut} (\RR^+, d^+)$ of all automorphisms of the groupoid $(\RR^+, d^+)$.

\begin{lemma}\label{lem3.6}
Let $f: \RR^+ \to \RR^+$ be increasing. Write
\begin{equation*}\label{lem3.6_eq1}
    f (x_0+0)  := \inf \{ f(x) : x \in (x_0, \infty) \},
\end{equation*}
and
\begin{equation*}\label{lem3.6_eq2}
    f (x_0 - 0) := \sup \{ f(x) : x \in [0, x_0) \}
\end{equation*}
for each $x_0 \in (0, \infty)$ and, in addition, denote by $f(0+0)$ the infinum of the set $\{ f(x): x \in (0, \infty)\}$,
\begin{equation*}\label{lem3.6_eq3}
    f (0+0) : = \inf \{ f(x) : x \in ( 0, \infty) \}.
\end{equation*}
Then $f$ is a continuous function on $\RR^+$ if and only if
\begin{equation*}\label{lem3.6_eq4}
    f ( 0)  = f (0+0),
\end{equation*}
and the equalities
\begin{equation*}\label{lem3.6_eq5}
f(x_0 - 0)  = f(x_0)  = f(x_0 +0)
\end{equation*}
hold for each $x_0 \in (0, \infty)$.
\end{lemma}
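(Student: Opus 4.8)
The plan is to reduce the statement to the standard $\varepsilon$--$\delta$ characterisation of continuity by first identifying the quantities $f(x_0+0)$ and $f(x_0-0)$ with genuine one-sided limits. The entire argument rests on monotonicity, which is precisely what forces these one-sided limits to exist and to coincide with the indicated infimum and supremum.

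First I would establish the following auxiliary fact: if $f$ is increasing, then for every $x_0 \in (0,\infty)$ the equalities
$$
\lim_{x \to x_0^+} f(x) = f(x_0+0), \qquad \lim_{x \to x_0^-} f(x) = f(x_0-0)
$$
hold, and likewise $\lim_{x\to 0^+} f(x) = f(0+0)$. For the right limit, write $L = f(x_0+0) = \inf\{f(x) : x > x_0\}$; this infimum is finite because $f$ takes values in $\RR^+$ and is therefore bounded below by $0$. Given $\varepsilon > 0$, choose $x_1 > x_0$ with $f(x_1) < L + \varepsilon$; then monotonicity gives $L \leqslant f(x) \leqslant f(x_1) < L + \varepsilon$ for every $x \in (x_0, x_1)$, so $\lim_{x\to x_0^+} f(x) = L$. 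The left limit is treated symmetrically, using the supremum and the fact that it is bounded above by $f(x_0)$. The same argument at the left endpoint handles $f(0+0)$, where only the right limit is meaningful.

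Having this, I would observe that monotonicity also yields the chains $f(x_0-0) \leqslant f(x_0) \leqslant f(x_0+0)$ at each interior point and $f(0) \leqslant f(0+0)$ at the endpoint, so the conditions in the statement are exactly the assertions that the relevant one-sided limits equal the value of $f$. It then remains only to invoke the elementary characterisation of continuity: $f$ is continuous at an interior point $x_0$ if and only if both one-sided limits exist and agree with $f(x_0)$, and $f$ is continuous at the left endpoint $0$ if and only if $\lim_{x\to 0^+} f(x) = f(0)$. Combining these equivalences over all points of $\RR^+$ gives the claimed two-sided statement.

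I do not expect a genuine obstacle, since this is a classical fact about monotone functions; the only point requiring care is the auxiliary step, where the \emph{existence} of the one-sided limits must be derived from monotonicity rather than assumed, and where the point $0$ must be treated separately because no left limit is available there.
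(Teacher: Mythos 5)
Your proof is correct. Note, however, that the paper does not actually prove this lemma at all: it simply cites Natanson's \emph{Theory of Functions of a Real Variable} (pages 204--205), where this classical fact about monotone functions is established. Your argument is the standard one that such a citation delegates to --- using monotonicity to show that the one-sided limits exist and coincide with the stated infimum and supremum, then invoking the one-sided-limit characterisation of continuity, with the endpoint $0$ treated separately since only right-continuity is meaningful there. In effect you have supplied a self-contained proof where the authors chose to outsource it, and your care about the two points that genuinely require monotonicity (existence of the one-sided limits, and the inequalities $f(x_0-0) \leqslant f(x_0) \leqslant f(x_0+0)$ that make the stated equalities equivalent to continuity) is exactly where the substance of the lemma lies.
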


For a proof see \cite[pages 204--205]{Natanson1983}.

The next lemma follows directly from Theorem~5 of \cite{Bourbaki1995} (see pages 338--339).

\begin{lemma}\label{lem3.7}
A mapping $f : \RR^+ \to \RR^+$ is a self-homeomorphism of $\RR^+$ if and only if
$$
f(\RR^+) = \RR^+
$$
and $f$ is strictly monotonic and continuous on $\RR^+$.
\end{lemma}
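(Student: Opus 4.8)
The plan is to establish the two implications separately, using that $\RR^+ = [0,\infty)$ is an interval together with two classical properties of real functions: a continuous injection on an interval is strictly monotone, and a strictly monotone function whose range is an interval has no jump discontinuities and hence is continuous. Recall that a self-homeomorphism of $\RR^+$ is, by definition, a continuous bijection $f : \RR^+ \to \RR^+$ whose inverse $f^{-1}$ is also continuous.

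For the necessity of the conditions, suppose $f$ is a self-homeomorphism. Then $f$ is continuous, and its bijectivity gives $f(\RR^+) = \RR^+$ at once. It remains to show that $f$ is strictly monotone. Since $f$ is a continuous injection on the interval $\RR^+$, I would argue by contradiction: were $f$ not monotone, there would exist $a < b < c$ in $\RR^+$ for which $f(b)$ does not lie between $f(a)$ and $f(c)$, and applying the intermediate value theorem to $f$ on $[a,b]$ and on $[b,c]$ would yield two distinct points with a common image, contradicting injectivity. Hence $f$ is strictly monotone.

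For the sufficiency, assume $f(\RR^+) = \RR^+$ and that $f$ is strictly monotone and continuous. I first observe that $f$ must in fact be strictly increasing: if $f$ were strictly decreasing, then $f(x) < f(0)$ for every $x > 0$, so the range of $f$ would be bounded above by $f(0)$, contradicting $f(\RR^+) = \RR^+$. Strict monotonicity forces injectivity, and together with the surjectivity $f(\RR^+) = \RR^+$ this makes $f$ a continuous bijection, so $f^{-1} : \RR^+ \to \RR^+$ exists and is again strictly increasing with range $\RR^+$. To finish I must show that $f^{-1}$ is continuous; by Lemma~\ref{lem3.6} it suffices to rule out jumps of $f^{-1}$. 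If $f^{-1}$ had a jump at some $x_0$, then a nondegenerate subinterval of $\RR^+$ would be omitted from the range of $f^{-1}$, contradicting that this range is all of $\RR^+$. Therefore $f^{-1}$ is continuous and $f$ is a self-homeomorphism.

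I expect the continuity of the inverse in the sufficiency direction to be the only delicate point. It rests on the principle that a monotone function can be discontinuous only through a jump and that any such jump leaves a gap in the range; Lemma~\ref{lem3.6} makes this precise by reducing continuity of the increasing function $f^{-1}$ to the agreement of its one-sided limits with its values.
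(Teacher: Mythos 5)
Your proof is correct, but it is genuinely different from what the paper does: the paper gives no argument at all for Lemma~\ref{lem3.7}, simply citing Theorem~5 of \cite{Bourbaki1995} (pages 338--339), the classical theorem on strictly monotone continuous maps between intervals of the real line. You instead prove the statement from scratch: for necessity you derive strict monotonicity of a continuous injection on the interval $\RR^+$ via the intermediate value theorem, and for sufficiency you first rule out the strictly decreasing case (correctly, since a strictly decreasing map on $[0,\infty)$ has range bounded above by $f(0)$ and so cannot be onto), then obtain continuity of $f^{-1}$ by the gap argument: an increasing function with a jump omits a nondegenerate interval from its range, which is excluded because $f^{-1}$ is onto $\RR^+$. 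This is exactly the point where a continuous bijection could fail to be a homeomorphism in general, and you handle it properly; note also that your reduction to strict increase is what legitimizes invoking Lemma~\ref{lem3.6}, which is stated only for increasing functions, for $f^{-1}$. What your route buys is self-containedness and a nice reuse of Lemma~\ref{lem3.6}, which the paper otherwise uses only inside Theorem~\ref{th3.9}; what the paper's route buys is brevity, outsourcing a standard real-analysis fact to a standard reference. The only soft spot in your write-up is the step asserting that a non-monotone injection admits a triple $a<b<c$ with $f(b)$ outside the interval bounded by $f(a)$ and $f(c)$; this is true but deserves the short case analysis (or a citation), since it is the combinatorial heart of the monotonicity claim.
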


\begin{remark}\label{rem3.8}
In Lemma~\ref{lem3.7} and Theorem~\ref{th3.9} we consider $\RR^+$ as a topological space with topology included by the usual metric
$$
d(x,y) = |x-y|, \quad \textrm{for} \  x,y \in \RR^+.
$$
\end{remark}

\begin{theorem}\label{th3.9}
The following statements are equivalent for every function $F: \RR^+ \to \RR^+$.
\begin{itemize}
    \item[(i)] $F \in \mathbf{Aut} (\RR^+, d^+)$.

    \item[(ii)] $F$ is strictly increasing  and satisfies  the equality
    \begin{equation}\label{th.3.9_eq1}
     F(\RR^+) = \RR^+.
    \end{equation}

    \item[(iii)] $F$ is a self-homeomorphism of $\RR^+$.
\end{itemize}
\end{theorem}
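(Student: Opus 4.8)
The plan is to establish the two equivalences (i) $\Leftrightarrow$ (ii) and (ii) $\Leftrightarrow$ (iii) separately, leaning on Theorem~\ref{th3.4} for the first and on Lemmas~\ref{lem3.6} and \ref{lem3.7} for the second.

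For (i) $\Leftrightarrow$ (ii), I would start from Definition~\ref{def2.5*}: an automorphism is precisely a surjective monomorphism. By Theorem~\ref{th3.4} we have $\mathbf{Mon}(\RR^+, d^+) = \mathbf{SI}$, so $F \in \mathbf{Aut}(\RR^+, d^+)$ iff $F \in \mathbf{SI}$ and $F(\RR^+) = \RR^+$. Since every element of $\mathbf{SI}$ is strictly increasing, this immediately yields (ii). For the converse I must check that a strictly increasing surjection automatically satisfies $F(0) = 0$, which is the extra condition that membership in $\mathbf{SI}$ demands: being strictly increasing, $F(0) = \min F(\RR^+)$, and surjectivity forces $0 \in F(\RR^+)$, so $F(0) \leqslant 0$; as $F(0) \in \RR^+$ this gives $F(0) = 0$. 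Hence $F \in \mathbf{SI} = \mathbf{Mon}(\RR^+, d^+)$, and a surjective monomorphism is an automorphism, which gives (i).

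For (ii) $\Leftrightarrow$ (iii) I would invoke Lemma~\ref{lem3.7}, which reduces being a self-homeomorphism to the three conditions $F(\RR^+) = \RR^+$, strict monotonicity, and continuity. The direction (iii) $\Rightarrow$ (ii) only requires ruling out the strictly decreasing alternative: a strictly decreasing $F$ has $F(0) = \sup F(\RR^+)$, so its image is bounded above and cannot equal the unbounded set $\RR^+$; thus $F$ is strictly increasing and \eqref{th.3.9_eq1} holds. The nontrivial direction is (ii) $\Rightarrow$ (iii), where the only missing ingredient is continuity; this is the main obstacle.

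To prove continuity of a strictly increasing surjection I would argue by contradiction using the one-sided limits in Lemma~\ref{lem3.6}. If $F$ fails to be continuous, then either $F(0) < F(0+0)$, or $F(x_0 - 0) < F(x_0)$, or $F(x_0) < F(x_0 + 0)$ at some $x_0 \in (0, \infty)$. In each case monotonicity confines the values of $F$ to lie outside the corresponding nonempty open interval; for instance in the second case $F(x) \leqslant F(x_0 - 0)$ for $x < x_0$ while $F(x) \geqslant F(x_0)$ for $x \geqslant x_0$, so no point of the interval $(F(x_0-0), F(x_0))$ is attained. This contradicts $F(\RR^+) = \RR^+$. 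Hence $F$ is continuous, and Lemma~\ref{lem3.7} gives (iii). I expect this verification that a jump discontinuity would leave a gap in the range, incompatible with surjectivity, to be the core of the argument, the remaining steps being bookkeeping.
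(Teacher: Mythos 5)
Your proposal is correct, and its core coincides with the paper's: the continuity step in (ii) $\Rightarrow$ (iii) is handled exactly as in the paper, by combining Lemma~\ref{lem3.7} with a contradiction argument via Lemma~\ref{lem3.6}, showing that a jump of the increasing function $F$ would leave a nonempty interval disjoint from $F(\RR^+)$, contradicting \eqref{th.3.9_eq1}. Where you genuinely diverge is in the global structure and in how you handle the condition $F(0)=0$. The paper proves the cycle (i) $\Rightarrow$ (ii) $\Rightarrow$ (iii) $\Rightarrow$ (i), and in the step (iii) $\Rightarrow$ (i) it establishes $F(0)=0$ by a topological argument: the restriction of $F$ to $(0,\infty)$ is a homeomorphism onto $\RR^+\setminus\{F(0)\}$, and this set is connected only when $F(0)=0$; strict increase is then deduced from strict monotonicity together with $F(x_0)>F(0)$. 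You instead prove the two biconditionals (i) $\Leftrightarrow$ (ii) and (ii) $\Leftrightarrow$ (iii), and you obtain $F(0)=0$ by purely order-theoretic means: for a strictly increasing surjection, $F(0)=\min F(\RR^+)$ and surjectivity puts $0$ in the image, forcing $F(0)=0$; likewise you rule out the strictly decreasing alternative in (iii) $\Rightarrow$ (ii) by noting that a decreasing map has image bounded above by $F(0)$, which cannot exhaust $\RR^+$. Your route buys elementarity --- connectedness is never invoked, topology enters only through Lemma~\ref{lem3.7} --- while the paper's connectedness argument is the more standard topological reflex and transfers to settings where order arguments are unavailable. Both are complete proofs; yours arguably isolates more cleanly why $F(0)=0$ is automatic for strictly increasing surjections.
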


\begin{proof}
$(i) \Longrightarrow (ii)$. Let $F$ belong  to $\mathbf{Aut} (\RR^+, d^+)$. Then
$$
F \in \mathbf{Mon} (\RR^+, d^+)
$$
holds and, consequently, $F$ is strictly increasing by Corollary~\ref{cor3.5}. The validity of~\eqref{th.3.9_eq1} follows from Definition~\ref{def2.5*}. Thus $(ii)$ holds.

$(ii) \Longrightarrow (iii)$. Let $(ii)$ hold. Since $F$ is strictly increasing, $F$ is strictly monotonic. Moreover, \eqref{th.3.9_eq1} holds by statement $(ii)$. Thus, by Lemma~\ref{lem3.7}, statement $(iii)$ is valid if $F$ is continuous.

Suppose contrary that $F$ is a discontinuous function. Then, by Lemma~\ref{lem3.6},
\begin{equation}\label{th3.9_preq1}
f(0) < f( 0+0)
\end{equation}
or there is $x_0 \in (0, \infty)$ such that
\begin{equation}\label{th3.9_preq1-1}
f(x_0- 0) < f( x_0+0).
\end{equation}
If \eqref{th3.9_preq1} holds, then using \eqref{lem3.6_eq3} we obtain the equality
\begin{equation}\label{th3.9_preq2}
    (f(0),  f(0+0))  \cap f(\RR^+) = \emptyset
\end{equation}
where
\begin{equation}\label{th3.9_preq3}
(f(0), f(0+0)) = \{ x\in \RR^+: f(0) <x  < f(0+0)\}.
\end{equation}
The interval $(f(0), f(0+0))$ is nonempty by \eqref{th3.9_preq1}. Hence $F(\RR^+)$ is a proper subset of $\RR^+$,
\begin{equation}\label{th3.9_preq4}
F(\RR^+) \subsetneq \RR^+
\end{equation}
contrary to \eqref{th.3.9_eq1}.

If inequality \eqref{th3.9_preq1-1} satisfied for some point $x_0 \in (0, \infty)$, then reasoning in a similar way we can prove that \eqref{th3.9_preq4} also holds.
Since~\eqref{th3.9_preq4} contradicts~\eqref{th.3.9_eq1}, statement $(iii)$ follows.

$(iii) \Longrightarrow (i)$. Let $F : \RR^+ \to \RR^+$ be a homeomorphism. We must prove that
\begin{equation}\label{th3.9_preq5}
    F \in \mathbf{Aut} (\RR^+, d^+).
\end{equation}
By Lemma~\ref{lem3.7}, equality~\eqref{th.3.9_eq1} holds. Consequently, by Definition~\ref{def2.5*}, \eqref{th3.9_preq5} is valid iff
\begin{equation}\label{th3.9_preq6}
F \in \mathbf{Mon} (\RR^+, d^+).
\end{equation}
It follows from Corollary~\ref{cor3.5} that \eqref{th3.9_preq6} holds if $F$ is strictly increasing and satisfies the equality
\begin{equation}\label{th3.9_preq7}
F (0)=0.
\end{equation}
Let us prove equality~\eqref{th3.9_preq7}.

The restriction of $F$ on the interval $(0, \infty)$ is a homeomorphism of $(0, \infty)$  on $\RR^+ \setminus \{ F(0)\}$. Consequently $\RR^+  \setminus \{F(0)\}$ is a connected subset of $\RR^+$. The last statement is valid iff \eqref{th3.9_preq7} holds.

Equality~\eqref{th3.9_preq7} and Lemma~\ref{lem3.7} imply that $F$ is strictly increasing. Indeed, let $x_0$ be an arbitrary point of $(0, \infty)$. Since $F$ is a bijective mapping~\eqref{th3.9_preq7} implies that
$$
F(x_0) \in (0, \infty).
$$
Thus the inequality
\begin{equation}\label{th3.9_preq8}
F(x_0) > F(0)
\end{equation}
holds. By Lemma~\ref{lem3.7}, the mapping $F$ is strictly monotonic. The last property and \eqref{th3.9_preq8} imply that $F$ is strictly  increasing.

Thus~\eqref{th3.9_preq6} holds. Statement $(i)$ follows.

The proof is completed.
\end{proof}

It should be noted that the concept of ultrametric-preserving functions has been extended to the special case of ``ultrametric distances'' (see \cite{Dov2019a}). These distances were
introduced by Priess--Crampe and Ribenboim \cite{PR1993AMSUH} and were studies
by different researchers \cite{PR1996AMSUH, PR1997AMSUH, Rib1996PMH, Rib2009JoA}. It seems to be interesting to find generalizations of the main results of the present paper to groupoids generated by such distances.

\section*{Conflict of interest statement}

The research was conducted in the absence of any commercial or financial relationships that could be construed as a potential conflict of interest.

\section*{Funding}

Oleksiy Dovgoshey was supported by grant 359772 of the Academy of Finland.

\end{document}